%% file: main.tex
\documentclass[twoside, 11pt]{article}

\usepackage{graphicx, subcaption}
\usepackage[margin=1in, footnotesep=0.5in]{geometry}
\usepackage[sort&compress, numbers]{natbib} 
\usepackage{amsfonts, amsmath, amssymb, amsthm}
\usepackage[shortlabels]{enumitem}
\usepackage{booktabs}
\usepackage{mathtools}
\mathtoolsset{showonlyrefs}

\usepackage{color}

\usepackage{hyperref}
\definecolor{customdarkred}{RGB}{150,0,0}
\definecolor{customdarkgreen}{RGB}{0,150,0}
\definecolor{customdarkblue}{RGB}{0,0,150}
\hypersetup{colorlinks=true, linkcolor=customdarkred, citecolor=customdarkgreen, urlcolor=customdarkblue}
\usepackage{url}
\usepackage[linesnumbered, ruled]{algorithm2e}
\usepackage{authblk}

\input notation.tex

\title{Theoretical Foundations of \\ Ordinal Spherical Multidimensional Scaling}
\author[1]{Elizabeth Coda} 
\author[1,2]{Ery Arias-Castro}
\affil[1]{\small Department of Mathematics, University of California, San Diego} 
\affil[2]{\small Halıcıoğlu Data Science Institute, University of California, San Diego}
\date{}

\begin{document}
\maketitle
\thispagestyle{empty}

\begin{abstract}  
There has been general interest in spherically constrained embeddings as data with an inherently circular or spherical structure arise in a number of applications. While many methods have been proposed on the metric side, little work has been done on the ordinal side in terms of methodology or theory. Here, we focus on the fundamental question of uniqueness of ordinal spherical MDS: Given a realizable setting in which underlying objects lie on the unit sphere and all that is known are their ordinal comparisons of the form `object $i$ is more similar to object $j$ than object $k$', is it possible to uniquely recover the original objects, up to an orthogonal transformation, in the large-sample limit? We answer affirmatively, both in the setting just described, as well as in the settings of spherical external unfolding (aka lateration) and spherical internal unfolding (aka preference mapping) in their ordinal variants.
\end{abstract}

\section{Introduction}
\label{sec:intro}

 Multidimensional scaling (MDS) refers to a set of methods that aim to embed objects into Euclidean space given possibly incomplete (dis)similarity data. In the metric setting, the dissimilarities are taken to have intrinsic meaning or value, whereas in the non-metric or ordinal setting, only comparisons between objects are available (i.e., object $i$ is more similar to object $j$ than object $k$). Alternatively, as a special case of this setting, in the non-metric setting only the relative magnitudes of the (dis)similarities are considered meaningful. Ordinal data are natural in the context of human subjects, and as much of the early work in the field of MDS took place in the context of psychometrics, ordinal MDS has been of central interest since the inception of MDS \cite{kruskal1964a,kruskal1964b,shepard1962a,shepard1962b}, described by \citet{borg2007modern} as ``a special case of MDS, and possibly the most important one in practice". 

The development of MDS methods in the metric and ordinal settings have been heavily intertwined. On the metric side with complete dissimilarity information, two of the most prominent methods for embedding include classical scaling (CS) and stress minimization. Though commonly used for metric scaling, Kruskal originally proposed using the stress in the context of non-metric scaling \citep{kruskal1964a, kruskal1964b}. The subsequent literature on stress minimization and other MDS methods is vast, and we do not attempt to summarize it here, but we do highlight some of the parallels between methods on both the metric and ordinal side including majorization approaches for stress minimization (e.g., on the metric side \cite{smacof2} and ordinal side \cite{terada2014local}), as well as semidefinite programming (SDP) based algorithms (e.g., on the metric side \cite{alfakih1999solving, biswas2004semidefinite, weinberger2005nonlinear} and ordinal side \cite{agarwal2007generalized, tamuz2011adaptively, van2012stochastic}).

In the realizable case of the metric setting the dissimilarities correspond to Euclidean distances between objects in Euclidean space. In this setting, all configurations of points that satisfy the dissimilarity constraints are rigid transformations of one another. In the ordinal realizable setting, the ordinal information is congruent with Euclidean distances between objects in Euclidean space. Here, one can only hope to recover the objects up to similarity information, and only in the large-number-of-objects limit. Even though ordinal embedding methods have been used since the beginnings of MDS, formal theoretical results for ordinal MDS were established only somewhat recently. Indeed, while \citet{shepard1966} discussed the question in a continuum limit of the problem, more rigorous results were only established much later in  \citep{kleindessner2014, arias2017some}, where the uniqueness of the ordinal embedding solution up to similarity transformations was proved in the MDS setting. This was later extended in \cite{arias2025ordinal} to the settings of external unfolding (aka lateration) and internal unfolding (aka preference mapping). For the reader not already familiar with this terminology (borrowed from psychometrics), definitions are forthcoming.

However, while in traditional MDS the embeddings are unconstrained in Euclidean space, in a number of applications an embedding into a different space may be more appropriate. In particular, there has been notable interest in embedding into the circle or a higher-dimension sphere, with natural applications arising in psychometrics \citep{borg1980}, geospatial science \citep{fernando2025clustering}, environmental science \citep{fernando2025clustering}, astronomy \citep{zoubouloglou2023scaled}, biology \citep{zoubouloglou2023scaled, evangelopoulos2020circular, luo2024spherical, ding2021deep}, computer vision \citep{elad2005texture, wilson2010spherical}, and natural language processing \citep{meng2019spherical}. Yet methods for spherically constrained embeddings have almost exclusively focused on the metric side \cite{cox1991multidimensional, borg1980,  elad2005texture, miller2022spherical, perry2020drawing, fernando2025clustering, wilson2010spherical, wilson2014spherical,rebonato2011most, papazoglou2017examination, pietersz2004rank}, despite natural applications on the ordinal side, as in the analysis of gene expression data \cite{taguchi2005relational} or human preference data in \cite[Section III]{borg2007modern}.

Given the intertwined development of metric and ordinal MDS in the Euclidean setting, as well as the general interest in spherical MDS, it is natural to consider the setting of ordinal spherical MDS. Here, we focus not on methodology for ordinal spherical MDS, but rather, take a step back and establish some fundamental theory for ordinal spherical MDS. Following work in the Euclidean ordinal MDS \citep{kleindessner2014, arias2017some, arias2025ordinal}, we consider the question of uniqueness in the realizable setting. The central question of interest is: {\em When ordinal comparisons are congruent with cosine similarity between points on the unit sphere, is enough information present in the ordinal constraints to uniquely recover the unknown points in the large-sample limit, up to orthogonal transformations?} We answer this question affirmatively in the setting of complete triplet information. We also consider the ordinal variants of spherical external unfolding and spherical internal unfolding, and show that, in these settings too, the ordinal information is enough to recover the individuals and objects, up to the natural invariances of the problem.

\subsection{Related Work}
\label{sec:related_work}

One popular approach to obtaining a constrained embedding is constrained stress minimization. Early work included adding linear constraints \citep{de1980multidimensional} and constrained or confirmatory monotone distance analysis (CMDA), in which a penalty term is added to the stress function to weakly enforce constraints \citep{borg1980}. See also \citep{heiser1983, smacof2, borg2007modern} for more details on these methods. 

As an alternate approach to embedding on the sphere, \citet{cox1991multidimensional} propose stress minimization with a spherically parametrized embedding. In their stress formulation, distances between embedded points are measured using Euclidean distances, though others have also measured distances along the geodesics \cite{elad2005texture, miller2022spherical, perry2020drawing, fernando2025clustering, wilson2010spherical}. Similarly, using geodesic distances has been proposed to construct embeddings on quadratic surfaces \citep{smacof2} or on the hyperbolic plane \citep{walter2002interactive}. For these particular surfaces, analytical forms of the geodesic distances exist, however, practical difficulties arise when working with geodesics on ellipsoids \cite{smacof2}. \citet{bronstein2006} propose a stress minimization method for embedding into a general Riemannian surface by approximating geodesic distances via the fast marching method. While stress minimization was proposed in the context of non-metric scaling \citep{kruskal1964a, kruskal1964b}, with the exception of CMDA, these methods were proposed in the metric setting. 

Other approaches for spherically constrained embeddings are based on CS, which finds the best rank-$p$ approximation to the doubly centered matrix of squared input dissimilarities by a spectral decomposition. Alternatively, CS can be viewed as a method that minimizes the strain. Methods for spherical embeddings include projecting the embedding obtained via CS onto the sphere \citep{rebonato2011most, papazoglou2017examination}, a majorization approach to minimizing the strain with spherical constraints \citep{pietersz2004rank}, and a modified spectral approach \citep{wilson2014spherical}. More generally, \citet{deng2024neuc} propose a generalized version of CS for embedding into a non-Euclidean space. Closely related is also a line of work that seeks to find a low rank approximation to a correlation matrix, where the unit diagonal constraint yields a spherically constrained embedding \cite{ pietersz2004rank, falissard1996spherical, wu2002fast, zhang2003optimal, higham2002computing}. The problem of finding a spherically constrained Euclidean distance matrix has also been considered in the optimization literature \cite{bai2015constrained}. 

We also highlight recent work on spherical PCA \cite{li2022manifold, liu2019spherical, tabaghi2024principal, luo2024spherical}.  In this dimensionality reduction setting, the data are points in some (often high) dimensional Euclidean space, and in this particular variant of the problem, a representation into a low-dimensional sphere is sought out. This can be viewed as a special case of spherical MDS, as the dissimilarities can be obtained from the data by, e.g., computing the pairwise Euclidean distances. 

On the ordinal side, methods for constrained embedding are scarce. One approach is to convert ordinal data  into metric data, as is done for example in \citep{borg1980, smacof2} where preference data is first converted to dissimilarity data before embedding objects onto the sphere via an internal unfolding method. Other work includes embedding objects into hyperbolic space given ordinal constraints \citep{suzuki2019hyperbolic}, and generating sentence embeddings that lie on certain manifolds via a triplet loss \citep{chavan2025manifold}. We are not aware of any theoretical results for ordinal embedding into a sphere, or any other curved space for that matter, except for the article \citep{suzuki2019hyperbolic}, which are of a very different nature. 

\subsection{Outline of the paper}
We closely follow the structure of \citep{arias2025ordinal}. We begin with the simplest setting of external unfolding in \secref{external}, consider the setting of spherical MDS with complete triplet similarity comparisons in \secref{mds}, and then consider internal unfolding in \secref{internal}. In each section, we begin by stating the problem in the discrete setting, then consider the problem in a continuum setting as originally proposed by \citet{shepard1966}, and finally, leverage these results in a discrete asymptotic setting inspired by \citet{kleindessner2014} to establish asymptotic uniqueness (up to similarity transformations). We emphasize that while we take a similar approach to that in \cite{arias2025ordinal}, the results established there for the Euclidean space do not directly imply the results obtained here for the spherical space.

\subsection{Background and notation}
Throughout, the embedding space is the unit sphere in dimension $p$, that is,
\[\bbS^{p-1} := \{ x \in \bbR^p : \|x\| = 1\},\] 
where $\| \cdot \|$ denotes the Euclidean norm, with associated inner product denoted $\langle \cdot, \cdot \rangle$. 
We use $x_i$ for the $i$-th coordinate of the point or vector $x$, so that $x = (x_1, \dots, x_p)$. The standard basis vectors of $\bbR^p$ are  $e_1, \dots, e_p$. For a positive integer $n$, $[n] := \{1, \dots, n\}$.

%We now review some basic concepts related to the topology and geometry of the sphere, and in doing so, also introduce some additional notation. 
A set $V \subset \bbS^{p-1}$ is open in $\bbS^{p-1}$ if there exists an open set $U \subset \bbR^p$ such that $V = U \cap \bbS^{p-1}$. An example of that is the open ball in the sphere centered at $x \in \bbS^{p-1}$ and of radius $\eps$, defined as 
$$\sball(x,\eps) = \ball(x,\eps) \cap \bbS^{p-1},$$ 
where $\ball(x,\eps)$ denotes the open ball in $\bbR^p$ centered at $x$ and of radius $\eps$. 
The interior of a set $U \subset \bbS^{p-1}$ relative to $\bbS^{p-1}$ is defined as 
$$\sint(U) = \Big\{x \in U : \text{there exists } \eps >0 \text{ such that } \sball(x,\eps) \subset U \Big\}.$$
Note that the closure of a set $U \subset \bbS^{p-1}$ relative to $\bbS^{p-1}$ coincides with its closure relative to the surrounding Euclidean space $\bbR^p$.

For any pair of points $x,x' \in \bbS^{p-1}$ not diametrically opposed, there is a unique great circle passing through both points, which can be parametrized by arc length via an angle $\theta$ as $\tilde{\gamma}_{(x,x')}: [0,\theta_{(x,x')}] \to \bbS^{p-1}$, where $\theta_{(x,x')} = \cos^{-1}(\langle x, x'\rangle)$ is the angle between $x$ and $x'$ seen as vectors, and 
\begin{align}
\label{eq:great_circle}
    \tilde{\gamma}_{(x,x')}(\theta) =  \cos(\theta) x + \sin(\theta) \frac{x' - \langle x, x'\rangle x}{\|x' - \langle x, x'\rangle x\|}.
\end{align} 
We will often use the parameterization $\gamma_{(x,x')}: [0,1] \to \bbS^{p-1}$, where $\gamma_{(x,x')}(t) :=  \tilde{\gamma}_{(x,x')}(t \theta_{(x,x')})$, which can be extended to the entire real line. The arc itself as a curve on $\S$ will be denoted $\arc(x,x') := \gamma_{(x,x')}([0,1])$. For $x,x' \in \bbS^{p-1}$ not diametrically opposed, their spherical midpoint is defined as the unique equidistant point on the shortest arc between them. This point is $\gamma_{(x,x')}(1/2)$, and can also be obtained by normalizing their Euclidean midpoint, i.e., as $(x+x')/\|x+x'\|$. 

% We say $A \subset \bbS^{p-1}$ is spherically convex if for every $x,x' \in A$ not diametrically opposed, $\gamma_{(x,x')}(t) \in A$ for all $t \in [0,1]$. The spherical convex hull of a set $A \subset \bbS^{p-1}$, denoted $\conv(A)$, is the smallest spherically convex set containing $A$. Finally, we say a set $A \subset S$ is spherically symmetric about $x$ if for any $x' \in A$, $\gamma_{(x, x')}(-1) \in A$, that is, there is $x'' \in A$ such that $x$ is the spherical midpoint of $x'$ and $x''$. For conciseness, we will often say convex, convex hull, and symmetric in place of spherically convex, spherical convex hull, and spherically symmetric when the meaning is clear from the context. 

For two distinct points $x,x' \in \bbR^p$, define $\hplane(x,x') = \{y : \|y-x\| = \|y-x'\|\}$, which is the affine hyperplane passing through $\frac12(x+x')$ perpendicular to $x-x'$; we also define $\hplane^+(x,x') = \{y : \|y-x\| < \|y-x'\|\}$, which is one of the two open half-spaces defined by that hyperplane. Note that, if $x,x' \in \S$, $\hplane(x,x')$ is a linear hyperplane as it passes through the origin, and it also passes through the midpoint of $x,x'$ on the sphere. 
Unless otherwise specified, a hyperplane is linear. 
Hyperplanes play a central role because of the following elementary result (whose proof is left to the reader).
\begin{lemma}\label{lem:openhyper}
Suppose $\cY$ is an open subset of $\bbS^{p-1}$. If $\cH$ is a hyperplane that intersects $\cY$, i.e., $\cH \cap \cY \ne \varnothing$, then $\cH = \hplane(y,y')$ for some $y,y' \in \cY$.
\end{lemma}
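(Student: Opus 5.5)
Fix a point $y_0 \in \cH \cap \cY$ and let $u$ be a unit normal vector to $\cH$, so that $\cH = \{z : \langle z, u\rangle = 0\}$ and $\langle y_0, u\rangle = 0$. The plan is to produce two points of $\cY$ that are mirror images of each other under the reflection across $\cH$. The key identity, valid for $y,y' \in \S$, is
\[
\|z-y\|^2 - \|z-y'\|^2 = 2\langle z, y'-y\rangle ,
\]
which holds because $\|y\| = \|y'\| = 1$. Hence $\hplane(y,y') = (y-y')^\perp$. It therefore suffices to find distinct $y,y' \in \cY$ with $y - y'$ parallel to $u$.

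For small $t>0$, set
\[
y = \cos(t)\, y_0 + \sin(t)\, u, \qquad y' = \cos(t)\, y_0 - \sin(t)\, u .
\]
Both points lie on $\S$, since $y_0 \perp u$ and both are unit vectors. They are distinct because $y - y' = 2\sin(t)\, u \neq 0$. They lie on the great circle through $y_0$ in the direction $u$, and they are symmetric about $y_0$.

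Since $\cY$ is open in $\S$, there is $\eps > 0$ with $\sball(y_0,\eps) \subset \cY$. We have
\[
\|y - y_0\|^2 = \|y' - y_0\|^2 = 2 - 2\cos(t),
\]
which tends to $0$ as $t \to 0$. So for $t$ small enough both $y$ and $y'$ lie in $\sball(y_0,\eps) \subset \cY$.

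For such $t$, we get $\hplane(y,y') = (2\sin(t)\, u)^\perp = u^\perp = \cH$, which proves the claim. Nothing here is a real obstacle. The one point to handle with care is the identity $\hplane(y,y') = (y-y')^\perp$ for points on the sphere, which is exactly where $\|y\| = \|y'\|$ is used. Once that is in place, the construction of a reflected pair near $y_0$ uses only openness of $\cY$.
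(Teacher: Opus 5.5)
Your proposal is correct. The paper gives no proof of this lemma (it is ``left to the reader''), and your argument is the natural elementary one it has in mind: $\hplane(y,y') = (y-y')^\perp$ for unit vectors $y,y'$, and you pick two mirror-image points across $\cH$ in a small cap around a point of $\cH \cap \cY$.
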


The following result regarding equiangular points on the sphere will also be useful.

\begin{lemma}
\label{lem:z_config}
    Let $z^{\pm}_1, \dots, z^{\pm}_p$ be distinct points on $\bbS^{p-1}$ such that \begin{equation}
    \label{eq:z_constraint}
        \langle z_i^s, z_j^t \rangle = \langle z_k^u, z_l^v \rangle
\quad \text{for all } i \neq j,\ k \neq l,\ \text{and } s,t,u,v \in \{+,-\}.
\end{equation}
    Then, up to an orthogonal transformation, the only possible configuration is $z_i^{\pm} = \pm e_i$.
%    , where $e_i$ are the standard basis vectors of $\bbR^p$. 
\end{lemma}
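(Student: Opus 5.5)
The plan is to work with the differences $d_i := z_i^+ - z_i^-$ instead of the points themselves. Condition \eqref{eq:z_constraint} says that every inner product between points with \emph{different} indices equals one common value, which we call $c$. Subtracting two such inner products then gives orthogonality relations that pin the configuration down.

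\emph{Step 1 (the differences are orthogonal).} Each $d_i$ is nonzero, because the points are distinct. Fix $i \neq j$ and $t \in \{+,-\}$. By \eqref{eq:z_constraint},
\[
\langle d_i, z_j^t\rangle = \langle z_i^+, z_j^t\rangle - \langle z_i^-, z_j^t\rangle = c - c = 0 .
\]
So $d_i$ is orthogonal to both $z_j^+$ and $z_j^-$, hence to $d_j$. The vectors $d_1,\dots,d_p$ are therefore nonzero and pairwise orthogonal, so they form an orthogonal basis of $\bbR^p$. Write $u_i := d_i/\|d_i\|$.

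\emph{Step 2 (each pair is antipodal along $u_j$).} Fix $j$ and $t$. By Step 1, $z_j^t$ is orthogonal to $u_i$ for every $i \neq j$. Since $\{u_i\}$ is an orthonormal basis, this forces $z_j^t \in \operatorname{span}(u_j)$. As $\|z_j^t\| = 1$, we get $z_j^t \in \{u_j, -u_j\}$.

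The points $z_j^+$ and $z_j^-$ are distinct, so one of them is $u_j$ and the other is $-u_j$. Because $d_j = z_j^+ - z_j^-$ is a positive multiple of $u_j$, we must have $z_j^+ = u_j$ and $z_j^- = -u_j$. In particular the common value is $c = 0$, so the configuration is consistent with \eqref{eq:z_constraint}.

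\emph{Step 3 (conclusion).} Let $Q$ be the orthogonal map sending the orthonormal basis $u_1,\dots,u_p$ to $e_1,\dots,e_p$. Then $Q z_i^\pm = \pm e_i$ for every $i$, which is the claimed configuration.

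The degenerate case $p=1$ is immediate: $\bbS^0 = \{\pm 1\}$, and there is no pair $i \neq j$ to constrain. For $p \geq 2$, every index $i$ has some $j \neq i$, which is what Step 1 needs.

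The only step requiring a genuine idea is Step 1, namely passing to the differences $d_i$ so that the unknown constant $c$ cancels. After that, everything is linear algebra.
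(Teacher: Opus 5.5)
Your argument is correct, and it takes a genuinely different route from the paper's. The paper proceeds by induction on $p$. It normalizes $z_1^+ = e_1$ and uses the equal inner products with $z_1^+$ to write $z_i^\pm = \cos(\theta) e_1 + \sin(\theta) u_i^\pm$, with $u_i^\pm$ on the equatorial $\mathbb{S}^{p-2}$. It applies the induction hypothesis to the $u_i^\pm$, then forces $\theta = \pi/2$ from $\cos\theta = \cos^2\theta$ (comparing $\langle z_1^+, z_2^+\rangle$ with $\langle z_2^+, z_3^+\rangle$), and only then places $z_1^-$. The case $p=2$ is a separate hands-on computation that includes a relabeling step.

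Your difference vectors $d_i = z_i^+ - z_i^-$ remove the unknown common value in one line and give you an orthonormal basis directly. So you need no induction, no base case, no angle equation, and no relabeling; your sign argument, with $d_j$ a positive multiple of $u_j$, already fixes which point is $+u_j$. You also use only $z_i^+ \neq z_i^-$. And you sidestep a detail the inductive proof leaves tacit: $\sin\theta \neq 0$, which is needed for the $u_i^\pm$ to be distinct so that the induction hypothesis applies.

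What the paper's route buys is mainly structural consistency with its later arguments, e.g.\ the proof of Theorem~\ref{thm:mds_open}, which likewise splits off the $e_1$ coordinate and recurses on $\mathbb{S}^{p-2}$. As a standalone proof, yours is shorter and more transparent.
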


\begin{proof}
    We proceed by induction. In the case $p=2$, without loss of generality, we may assume $z_1^+ = e_1$. Then, to satisfy $\langle z_1^+, z_2^+ \rangle =  \langle z_1^+, z_2^- \rangle$, we may, after possible relabeling, take $z_2^{\pm} = \cos(\theta)e_1 \pm \sin(\theta)e_2$ for some $\theta > 0$. However, it must also be the case that $\langle z_1^-, z_2^+ \rangle =  \langle z_1^-, z_2^- \rangle$, implying that $z_1^-$ is orthogonal to $z_2^+ - z_2^- = 2\sin(\theta)e_2$. Since $z_1^- \in \bbS^{1}$ must be distinct from $z_1^+$, the only possibility is $z_1^- = -e_1$.
    Finally, to satisfy $\langle z_1^+, z_2^+ \rangle =  \langle z_1^-, z_2^+ \rangle$, we must have $\theta = \pi/2$, concluding that $z_2^\pm = \pm e_2$.

    Now, for general $p$, assume $z_1^+ = e_1$. In order to satisfy $\langle z_1^+, z_2^+ \rangle = \langle z_1^+, z_2^- \rangle = \dots = \langle z_1^+, z_p^+ \rangle = \langle z_1^+, z_p^- \rangle $, for $2 \leq i \leq p$, we can take $z_i^{\pm} = \cos(\theta)e_1 + \sin(\theta)u_i^\pm$ for some $u_i^{\pm} \in \bbS^{p-2}$. The constraint in \eqref{eq:z_constraint} also implies that
    $$
     \langle u_i^s, u_j^t \rangle = \langle u_k^u, u_l^v \rangle
\quad \text{for all } i \neq j,\ k \neq l,\ \text{and } s,t,u,v \in \{+,-\}.$$
    By induction, up to an orthogonal transformation, we must have $u_i^\pm = \pm v_{i-1}$, where $v_i$ are the standard basis vectors of $\bbR^{p-1}$. Then, in order to satisfy $\langle z_1^+, z_2^+ \rangle = \langle z_2^+, z_3^+ \rangle$, $\theta$ must be $=\pi/2$. We have thus shown that, up to an orthogonal transformation, $z_1^+ = e_1$, and $z_i^{\pm} = \pm e_i$ for $2 \leq i \leq p$. It remains to place $z_1^-$, but it can readily be seen that the only possibly that satisfies the constraints is $z_1^- = -e_1$. 
\end{proof}

\section{Spherical external unfolding}
\label{sec:external}

In Euclidean external unfolding, the position of several objects is assumed known (possibly via some MDS method), and the task is to locate one or more individuals in space based on ranking or preference data for these objects. The term ``external unfolding" is popular in the psychometrics literature, though the problem has also been considered in the engineering literature under the name ``lateration", where the objects are often referred to as ``landmarks" or ``anchors". Even when all the (dis)similarity information is available, lateration can also be used for reasons of computational efficiency \citep{anderton2019, de2004sparse}. In spherical external unfolding, we will assume the position of several objects on the unit sphere is known, and the task is to locate one or more individuals on the unit sphere based on their preferences for the objects. This is the setting in \citep{yu2008}, for example. We focus on the ordinal variant of the problem.

\subsection{Discrete setting}
\label{sec:external_discrete}

\begin{figure}
    \centering
    \includegraphics[width=0.5\linewidth]{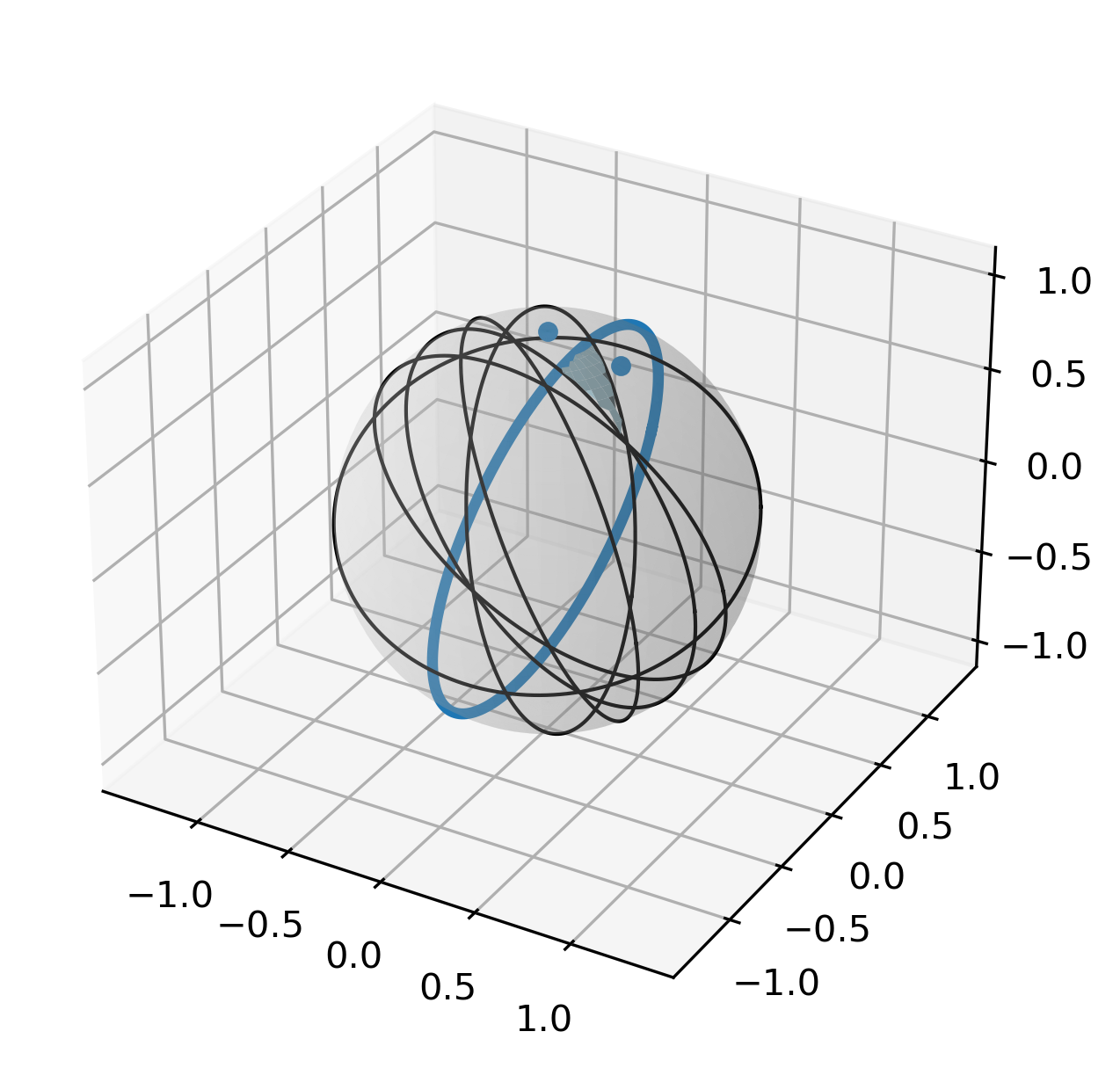}
    \caption{An illustration of the spherical external unfolding problem. The information that individual $x$ prefers one highlighted object over the other $x$ to a hemisphere. With an increasing number of such comparisons, the localization of $x$ is ever more precise. (Figure inspired by \citep{massimino2021}.)}
\end{figure}

With spherical constraints, the problem encountered in practice is as follows: Given landmark points $\cY_n := \{y_1, \dots, y_n\} \subset \bbS^{p-1}$ and a permutation $(r_1, \dots, r_n)$ of $(1, \dots, n)$, and a dimension $p \geq 2$, \begin{equation}
\label{eq:external_problem}
    \text{Find } x \in \bbS^{p-1} \text{ such that } \langle x,y_i\rangle < \langle x,y_j\rangle \text{ whenever } r_i < r_j.
\end{equation}

In the realizable setting, there is some unknown point, $\hat{x} \in \bbS^{p-1}$, such that $r_i < r_j$ whenever $ \langle \hat{x}, y_i \rangle < \langle \hat{x}, y_j \rangle$. In the finite case, the solutions to \eqref{eq:external_problem} form an open set, so it is not possible to recover an individual $\hat{x}$ exactly from its preferences for objects in $\cY_n$. However, as the number of landmark objects $n \to \infty$, we expect the solution set to shrink to a singleton set --- at least under some conditions on the landmark set $\cY_n$. 

In the Euclidean setting, it was shown in \citep{arias2025ordinal} that  as the number of landmark objects becomes dense in some (non-empty) open set, it is possible to recover the unknown point based on only ordinal distance comparisons. In the spherical setting, the landmark objects are restricted to the sphere, so that the result in \citep{arias2025ordinal} does not directly apply.

%\begin{remark}
%    In view of the fact that $\|x-y\|^2 = 2(1-\langle x , y \rangle) $ for all $x,y \in \bbS^{p-1}$, the problem in \eqref{eq:external_problem} is the same as that in \cite[Sec~2]{arias2025ordinal}, but with the embedding space being $\bbS^{p-1}$ instead of $\bbR^p$. 
%\end{remark}

\begin{remark}
We have assumed that both the individual and the objects are on the sphere. In another version of spherical external unfolding, one may wish to embed individuals on the sphere based on their preferences for objects which are not necessarily confined to the sphere. The results derived in this section, in particular the main result (\thmref{external_asymptotic}), can be adapted to work to within this other version of spherical external unfolding with only minor modification in both statement and proof. 
\end{remark}

\subsection{Continuum setting} 
Before considering the discrete set $\cY_n$, we turn to the continuum setting originally proposed by \citet{shepard1966}. For that, we consider a continuously infinite set of landmark points $\cY$. The set $\cY$ can be thought of as a ``filled-in" version of the set $\cY_n$ in the large-number-of-landmarks limit $n \to \infty$.  We are interested in conditions on $\cY$ under which it is possible to localize a point $x \in \bbS^{p-1}$ given all its preferences for objects in $\cY$. We reformulate the problem using the notion of equivalent points, where $x,x' \in \bbS^{p-1}$ are equivalent with respect to $\mathcal{Y}$ if 
$$
\langle x,y\rangle < \langle x,y'\rangle \iff\langle x',y\rangle < \langle x',y'\rangle, \qquad \text{for all } y,y' \in \mathcal{Y}.
$$
In other words, two points are equivalent with respect to $\cY$ if they are on the same side of any hyperplane defined by pairs of points belonging to $\cY$. Equivalent points are indistinguishable with respect to their preferences for objects in $\cY$. Thus, if equivalent points with respect to $\cY$ must coincide, it is possible to uniquely determine an individual based only on ordinal preferences for objects in $\cY$. In fact, it will be enough to consider a weaker notion of equivalence, where we say $x'$ is weakly equivalent to $x$ with respect to $\mathcal{Y}$ if 
\begin{align}
\label{eq:ext_weakly_equiv}
\langle x,y\rangle < \langle x,y'\rangle \implies\langle x',y\rangle \leq \langle x',y'\rangle, \qquad \text{for all } y,y' \in \mathcal{Y}.    
\end{align}
In other words, $x'$ must be on the same side as $x$ of any hyperplane defined by pairs of points belonging to $\cY$ not passing through $x'$.

As we are constrained to the sphere, we only need to determine the direction of $x$ and not its magnitude. In \thmref{external_open}, we show that when $p \geq 3$, it is sufficient for $\cY$ to have non-empty interior. But when $p=2$, i.e. when embedding into the unit circle, we ask that $\cY$ contain an open hemisphere. In \figref{external_p=2}, we provide a counter-example showing that this condition cannot be weakened. 

\begin{figure}
    \centering
    \includegraphics[width=0.3\linewidth]{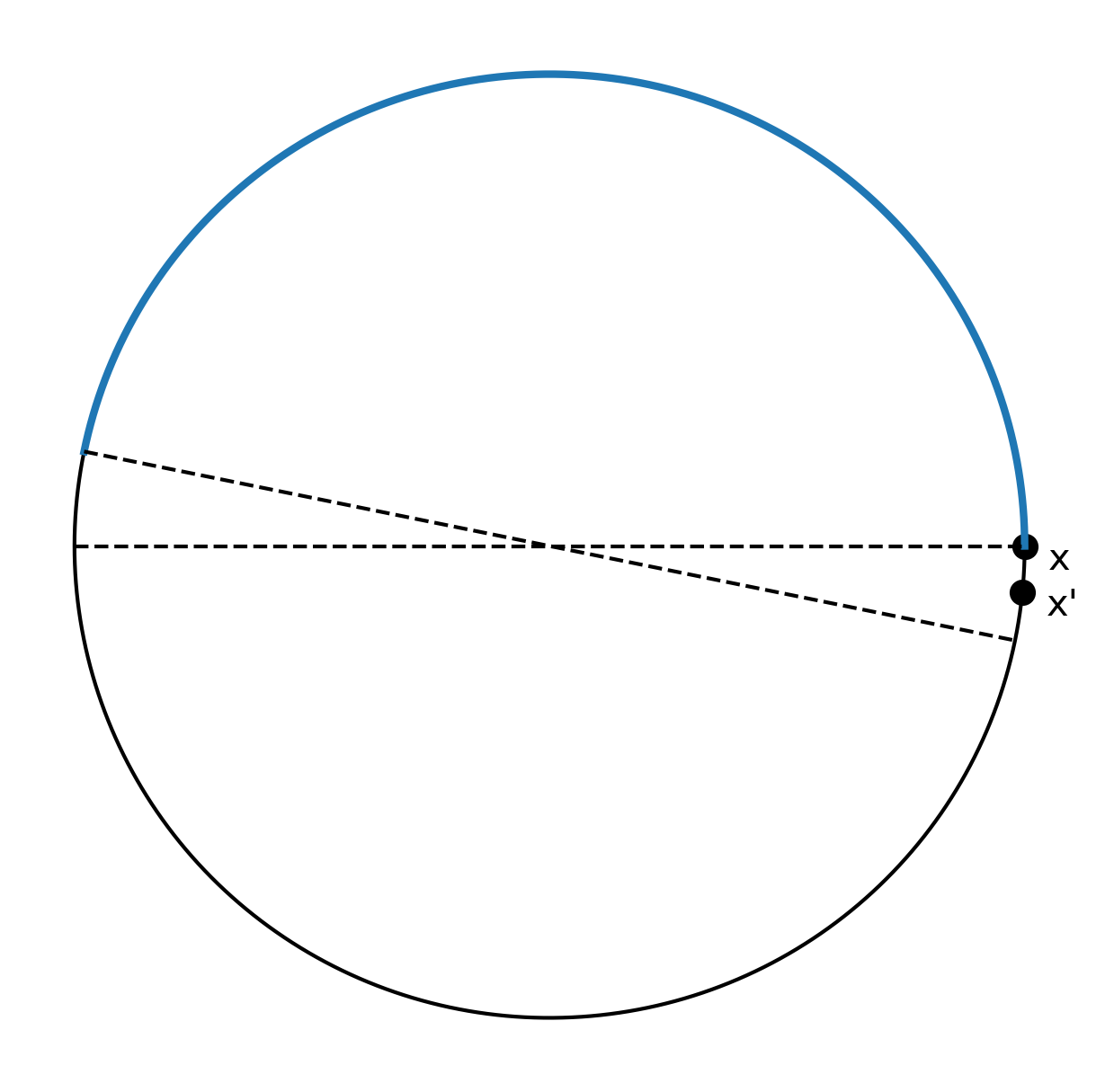}
    \caption{When $p=2$, $x$ and $x'$ can be weakly equivalent with respect to $\cY$ without coinciding. Here, $\cY = \{(\cos \theta, \sin \theta): 0 \leq \theta \leq \pi - 2\delta \}$ for some $\delta > 0$, and $x = (1,0)$, and $x' = (\cos \delta, -\sin \delta)$ are weakly equivalent, and yet distinct.}
    \label{fig:external_p=2}
\end{figure}

\begin{theorem}
\label{thm:external_halfsphere}
    If $\cY$ contains an open hemisphere, two points that are weakly equivalent with respect to $\cY$ in the sense of \eqref{eq:ext_weakly_equiv} must coincide. 
\end{theorem}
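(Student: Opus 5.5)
Suppose $x \neq x'$ are in $\bbS^{p-1}$ and $x'$ is weakly equivalent to $x$ with respect to $\cY$. The plan is to find $y,y' \in \cY$ with $\langle x,y\rangle < \langle x,y'\rangle$ but $\langle x',y\rangle > \langle x',y'\rangle$, contradicting \eqref{eq:ext_weakly_equiv}. Writing $u = y'-y$, we need $\langle x,u\rangle > 0 > \langle x',u\rangle$. In terms of hyperplanes: we want a hyperplane $\hplane(y,y')$ that strictly separates $x$ from $x'$, with neither point on it.

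First I will deal with the antipodal case $x' = -x$. Any $y,y'$ with $\langle x,y\rangle < \langle x,y'\rangle$ automatically give $\langle x',y\rangle > \langle x',y'\rangle$. Such a pair exists in $\cY$, because an open hemisphere contains points with different inner products with $x$ (it is open and of dimension at least one).

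So assume $x' \neq \pm x$. Let $\cH_0 = \hplane(x,x')$; this is the linear hyperplane with normal $x - x'$. A small rotation of $\cH_0$ within the plane spanned by $x$ and $x'$ still separates $x$ from $x'$ strictly, and neither point lies on the tilted hyperplane. Concretely, take normal vectors $w$ close to $x-x'$ with $\langle x,w\rangle >0> \langle x',w\rangle$. Any linear hyperplane $w^\perp$ with such a $w$ works, as long as it meets $\cY$: then \lemref{openhyper}, applied to the open hemisphere $\cY_0\subset\cY$, gives $w^\perp = \hplane(y,y')$ with $y,y' \in \cY_0$. Choosing the labels so that $y'-y$ is a positive multiple of $w$ yields the contradiction above.

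The main step is to show that some valid normal $w$ has $w^\perp \cap \cY_0 \ne \varnothing$. Write $\cY_0 = \{z \in \S : \langle z,a\rangle > 0\}$. A linear hyperplane $w^\perp$ misses $\cY_0$ only if $w^\perp \cap \S \subset \{\langle z,a\rangle \le 0\}$. Since $w^\perp\cap\S$ is symmetric under $z\mapsto -z$, this forces $\langle z,a\rangle=0$ on all of $w^\perp$, i.e.\ $w^\perp = a^\perp$, i.e.\ $w \parallel a$. So every linear hyperplane other than $a^\perp$ meets $\cY_0$. For $p=2$ this is exactly the reason a full open half-circle is needed: a proper arc can miss lines through the origin. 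Now the set of valid normals $w$ is open and nonempty, so it contains some $w$ not parallel to $a$, and we are done. The only delicate points are this symmetry argument and making sure \lemref{openhyper} gives distinct $y \neq y'$, which it does since $\hplane$ is defined only for distinct points.
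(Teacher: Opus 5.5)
Your proof is correct and is essentially the paper's argument. There are many linear hyperplanes strictly separating $x$ from $x'$, and at most one of them (the boundary hyperplane $a^\perp$ of the hemisphere) can miss $\cY$; \lemref{openhyper} then turns a separating hyperplane that meets $\cY$ into a pair $y,y'$ violating \eqref{eq:ext_weakly_equiv}. The paper compresses this into taking two hyperplanes crossing $\arc(x,x')$ and noting that one of them must meet $\cY$. Your version also makes several things explicit that the paper leaves implicit: the $z\mapsto -z$ symmetry argument behind that claim, the antipodal case $x'=-x$ (where $\arc(x,x')$ is undefined), and the application of \lemref{openhyper} to the open hemisphere itself rather than to a possibly non-open $\cY$.
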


\begin{proof}
Suppose $x,x' \in \S$ are distinct. We need to show that they are not weakly equivalent with respect to $\cY$. 
Take two hyperplanes, say $\cH$ and $\cH'$,  intersecting $\arc(x,x')$ but not passing through either $x$ or $x'$. Then, necessarily, at least one of them intersects $\cY$, say $\cH$. 
By \lemref{openhyper}, there are $y,y' \in \cY$ such that $\hplane(y,y') = \cH$. Because $x,x'$ are on opposite sides of $\cH$, we may assume that, say, $x \in \hplane^+(y,y')$ while $x' \in \hplane^+(y',y)$, which then precludes $x,x'$ from being weakly equivalent with respect to $\cY$.
\end{proof}

\begin{theorem}
\label{thm:external_open}
    Assume $p \geq 3$. If $\cY$ has non-empty interior, two points that are weakly equivalent with respect to $\cY$ in the sense of \eqref{eq:ext_weakly_equiv} must coincide. 
\end{theorem}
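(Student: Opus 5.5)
Following the proof of \thmref{external_halfsphere}, we take distinct $x,x' \in \S$ and exhibit $y,y' \in \cY$ with $x \in \hplane^+(y,y')$ and $x' \in \hplane^+(y',y)$, which rules out weak equivalence. By \lemref{openhyper}, applied to an open ball $\sball(z,\eps) \subset \cY$ (which exists because $\cY$ has non-empty interior), it is enough to find a linear hyperplane $\cH$ that \emph{strictly separates} $x$ and $x'$ and \emph{meets} $\sball(z,\eps)$. Indeed, such an $\cH$ equals $\hplane(y,y')$ for some $y,y' \in \cY$. Since neither $x$ nor $x'$ lies on $\cH$ and they are on opposite sides, after possibly swapping $y,y'$ we get $x \in \hplane^+(y,y')$ and $x' \in \hplane^+(y',y)$. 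Then $\langle x,y\rangle > \langle x,y'\rangle$ while $\langle x',y\rangle < \langle x',y'\rangle$, contradicting \eqref{eq:ext_weakly_equiv}.

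The whole task is therefore the construction of $\cH$, and I would build it through its normal vector. A linear hyperplane $\cH = v^\perp$ strictly separates $x$ and $x'$ iff $\langle v,x\rangle\langle v,x'\rangle<0$, and it meets $\sball(z,\eps)$ iff $v^\perp$ contains a unit vector near $z$; for instance, $\langle v,z\rangle = 0$ suffices. So I need $v$ with $\langle v,z\rangle = 0$, $\langle v,x\rangle>0$ and $\langle v,x'\rangle<0$. Let $P$ be the orthogonal projection onto $z^\perp$, and write $v \in z^\perp$. The two inequalities become $\langle v,Px\rangle>0>\langle v,Px'\rangle$ inside $z^\perp$, which has dimension $p-1\ge 2$. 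Such a $v$ exists as soon as $Px$ and $Px'$ are not nonnegative multiples of each other. The natural choice is $v = Px/\|Px\| - Px'/\|Px'\|$, when both projections are nonzero and not positively parallel.

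The main obstacle is the degenerate configurations. These are when $Px$ and $Px'$ are positively parallel, or one of them vanishes, which happens when $x$ or $x'$ equals $\pm z$. The degeneracy means that $x$, $x'$ and $z$ lie on a common great half-circle through $\pm z$. This is exactly where $p\ge3$ enters, and it is where \figref{external_p=2} shows the claim fails for $p=2$. To remove it, I would use the freedom in the center: replace $z$ by some $z'\in\sball(z,\eps/2)$ and use $\sball(z',\eps/2)\subset\cY$. The bad set of $z'$ consists of points lying in $\mathrm{span}(x,x')$, together with $\pm x$ and $\pm x'$. When $x' \ne -x$, $\mathrm{span}(x,x')$ is two-dimensional, so the bad set is a great circle plus finitely many points. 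When $x' = -x$, it is just $\{\pm x\}$. For $p\ge3$ this set has empty interior in $\S$, so a good $z'$ exists.

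For such a $z'$, the projections $Px, Px'$ onto $z'^\perp$ are nonzero and not parallel, hence not positively parallel. In the antipodal case they are negatively parallel, which is fine. The vector $v$ above then satisfies all three conditions, and the argument of the first paragraph concludes.
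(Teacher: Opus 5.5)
Your argument is correct, and it shares the paper's skeleton. In both, one finds a linear hyperplane that strictly separates $x$ and $x'$ and meets the interior of $\cY$, then invokes \lemref{openhyper} and concludes as in \thmref{external_halfsphere}. Where you differ is in how that hyperplane is produced.

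The paper pins the hyperplane to the spherical midpoint $\mu$ of $x,x'$. If $\langle v,\mu\rangle=0$ then $\langle v,x\rangle=-\langle v,x'\rangle$, so any such hyperplane either contains both points or strictly separates them. For $p\ge 3$ these hyperplanes sweep out all of $\bbR^p$, so one of them meets $\cY$.

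You instead pin the hyperplane to a point $z'$ in the interior of $\cY$, so meeting $\cY$ is automatic. You then choose the normal $v\in (z')^\perp$ to achieve separation, after moving $z'$ off the great circle through $x$ and $x'$. The hypothesis $p\ge 3$ enters as the fact that this great circle has empty interior in $\S$. In the paper it enters instead as the fact that every plane $\mathrm{span}(\mu,w)$ lies in some hyperplane.

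The paper's version is shorter, because separation comes for free from the symmetry about $\mu$. Yours is more explicit, and it settles two points the paper leaves implicit. First, it covers the antipodal case $x'=-x$, where the midpoint is undefined. Second, it handles the hyperplanes through $\mu$ that contain $\mathrm{span}(x,x')$, which must be discarded. That is harmless, since the remaining ones still cover the complement of $\mathrm{span}(x,x')$, and the interior of $\cY$ must meet that complement when $p\ge 3$.
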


\begin{proof}
Suppose $x,x' \in \S$ are distinct. We need to show that they are not weakly equivalent with respect to $\cY$. 
Consider all the hyperplanes passing through the midpoint of $x,x'$. As their union is the entire ambient space $\bbR^p$ (because $p \ge 3$), there must be one of them that intersects $\cY$. 
We then conclude exactly as we did in the proof of \thmref{external_halfsphere}.
\end{proof}

\subsection{Discrete asymptotic setting} 

We now return to the discrete setting in \secref{external_discrete} with landmark points $\cY_n = \{y_1, \dots, y_n \}$. We consider the large-number-of-landmarks limit $n \to \infty$ and let  $\cY_{\infty} := \{y_i : i \geq 1\}$. Guided by \thmref{external_halfsphere} and \thmref{external_open}, we assume $\cY_{\infty}$ is dense in some appropriate set of $\bbS^{p-1}$. In this asymptotic setting, when the situation is realizable, we establish that there is a unique individual on the sphere with the given preferences for objects in $\cY_{\infty}$.

\begin{theorem}
\label{thm:external_asymptotic}
If $p = 2$, assume that $\cl(\cY_{\infty})$ contains an open hemisphere, while if $p \geq 3$, assume that $\cl(\cY_{\infty})$ has non-empty interior. For each $n$, let $(x_n, x'_n) \in \bbS^{p-1} \times \bbS^{p-1}$ be a pair of points that are weakly equivalent with respect to $\mathcal{Y}_n$ in the sense of \eqref{eq:ext_weakly_equiv}. Then any accumulation point of $\{ (x_n, x'_n) : n \geq 1\}$ must be of the form $(x, x)$. In particular, if both $(x_n)$ and $(x'_n)$ converge, their limits must coincide.
\end{theorem}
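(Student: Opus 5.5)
Let $(x,x')$ be an accumulation point, attained along a subsequence $n_k$. We argue by contradiction, supposing $x \ne x'$. The plan is to reuse the separating-hyperplane argument from the proofs of \thmref{external_halfsphere} and \thmref{external_open}, with $\cY$ replaced by $\cl(\cY_\infty)$, and then pass from the limit back to finite $n$ by an openness/stability argument.

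\textbf{Step 1 (a robust separating hyperplane).} Set $\cY := \cl(\cY_\infty)$ and let $U$ be the open set it contains: an open hemisphere if $p=2$, a nonempty open subset of $\bbS^{p-1}$ if $p\ge 3$. As in the proofs of the continuum theorems, there is a hyperplane $\cH$ that
\begin{itemize}
\item intersects $\arc(x,x')$ strictly between $x$ and $x'$,
\item passes through neither $x$ nor $x'$,
\item meets the open set $U$.
\end{itemize}
By \lemref{openhyper}, $\cH=\hplane(y,y')$ with $y,y'\in U$, and we may label them so that $\langle x,y\rangle>\langle x,y'\rangle$ and $\langle x',y\rangle<\langle x',y'\rangle$. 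Both inequalities are strict because neither point lies on $\cH$. Since $U$ is open, we may take $y \ne y'$ in $U$ and choose $\eta>0$ with
\[
\langle x,y\rangle-\langle x,y'\rangle>3\eta, \qquad \langle x',y'\rangle-\langle x',y\rangle>3\eta .
\]

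\textbf{Step 2 (approximation by landmarks).} Since $\cY_\infty$ is dense in $\cY\supset U$ and the sets $\cY_n$ increase to $\cY_\infty$, there are, for all large $n$, landmarks $y_{i}, y_{j}\in\cY_n$ with $\|y_i-y\|,\|y_j-y'\|<\eta/2$. Because all points are unit vectors, these inner products are $1$-Lipschitz. Hence for all large $k$, once $x_{n_k}, x'_{n_k}$ are within $\eta/2$ of $x,x'$, we get
\[
\langle x_{n_k},y_i\rangle>\langle x_{n_k},y_j\rangle, \qquad \langle x'_{n_k},y_i\rangle<\langle x'_{n_k},y_j\rangle .
\]
Both strict margins survive the perturbation of size at most $2\eta$.

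\textbf{Step 3 (contradiction).} Weak equivalence of $x'_{n_k}$ to $x_{n_k}$ with respect to $\cY_{n_k}$, in the sense of \eqref{eq:ext_weakly_equiv} applied to the pair $(y_j,y_i)$, requires $\langle x'_{n_k},y_j\rangle\le\langle x'_{n_k},y_i\rangle$. This contradicts Step 2, so $x=x'$. The final claim follows immediately, since if both sequences converge their limits form the unique accumulation point.

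\textbf{Main obstacle.} The only delicate point is Step 1: we need a separating hyperplane that meets the \emph{open} part of $\cY$, not merely $\cY$, so that \lemref{openhyper} applies and the separation margins are strictly positive. For $p\ge3$ this is handled as in \thmref{external_open}: the hyperplanes through the midpoint cover $\bbR^p$, so one meets $U$. We must also make sure it avoids $x$ and $x'$, which holds because the hyperplanes through the midpoint meeting $U$ form an open family and those passing through $x$ form a lower-dimensional subfamily. For $p=2$ we use the two-line argument from \thmref{external_halfsphere} with the open hemisphere $U$.
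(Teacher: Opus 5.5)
Your proof is correct, but it is organized differently from the paper's.

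The paper passes to the limit first. It sets $\cY=\sint(\cl(\cY_\infty))$, shows that the limit pair $(x,x')$ is weakly equivalent with respect to $\cY$, and then invokes \thmref{external_halfsphere} or \thmref{external_open} as black boxes. You instead argue by contradiction. You re-run the separating-hyperplane argument on the limit pair to get a hyperplane $\hplane(y,y')$ with an explicit margin $3\eta$. Only then do you push that strict separation back to finite $n$, by approximating $y,y'$ with fixed landmarks $y_i,y_j\in\cY_{n_k}$.

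Your route is more careful at exactly the point where the paper's proof is terse. As written, the paper applies the weak-equivalence hypothesis for $\cY_n$ directly to points $y,y'\in\sint(\cl(\cY_\infty))$, which need not be landmarks at all. So something like your Step 2 is implicitly required there. The alternative is a double limit: first let $n\to\infty$ with landmarks $y_i,y_j$ fixed, then let $y_i\to y$ and $y_j\to y'$.

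You also make explicit that, for $p\ge 3$, the hyperplane through the midpoint can be chosen to avoid $x$ and $x'$. The proof of \thmref{external_open} leaves this unstated, and your openness versus lower-dimensional-subfamily argument is the right justification.

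In exchange, the paper's route is modular: it reuses the continuum result verbatim instead of reproving it inline.

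One trivial case is left implicit both by you and by the paper: $x'=-x$, where the arc and midpoint are undefined. In that case any hyperplane through the origin not containing $x$ strictly separates the two points, and one meeting $U$ is found exactly as before, so Step 1 still goes through.
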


\begin{proof}
We may assume without loss of generality that $x_n \to x$ and $x'_n \to x'$ as $n\to\infty$. We need to show that $x = x'$. 
Let $\cY = \sint(\cl(\cY_{\infty}))$. 
If $p=2$, \thmref{external_halfsphere} applies, while if $p \ge 3$, \thmref{external_open} applies, so that, either way, it is enough to show that $x,x'$ are weakly equivalent with respect to $\cY$. 

For that, take any $y,y' \in \cY$ such that $\langle x,y\rangle < \langle x,y'\rangle $. By continuity of the inner product, it must be the case that $\langle x_n,y\rangle < \langle x_n,y'\rangle$ for all $n$ large enough, in turn implying that $\langle x'_n,y\rangle \le \langle x'_n,y'\rangle$, by the assumption that $x_n,x'_n$ are weakly equivalent with respect to $\mathcal{Y}_n$ for all $n$. Taking the $n\to\infty$ limit, this forces $\langle x',y\rangle \le \langle x',y'\rangle$, again by continuity of the inner product.
\end{proof}

\section{Spherical multidimensional scaling}
\label{sec:mds}

In the setting of spherical multidimensional scaling, we have $n$ objects and the goal is to embed them on the unit sphere given pairwise (dis)similarity information. Most of the existing methods for spherical embedding highlighted in \secref{related_work} assume complete information in the metric setting. We focus on the ordinal setting. The most common scenarios are when all quadruple comparisons are available (object $i$ is more similar to object $j$ than object $k$ is to object $l$), and when all triplet comparisons are available (object $i$ is more similar to object $j$ than it is to object $k$). Note that the all-quadruple comparisons setting corresponds to complete ordinal information. We focus on the setting of ordinal triplet comparisons, also referred to as the method of triads \cite{torgerson1952multidimensional}, as it appears to be more natural in practice, such as when working with human subjects \cite{miller1956}. 

\subsection{Discrete setting}
\label{sec:mds_discrete}
In practice, when dealing with a finite set of items that need to be embedded on the unit sphere, the problem is as follows: Given a set of row ranks $(r_{ij}: i,j \in [n])$ with each $(r_{i1}, \dots , r_{in})$ a permutation of $(1, \dots, n)$, and a dimension $p \geq 2$,
\begin{align}
\label{eq:mds}
\text{Find } x_1, \dots, x_n \in \bbS^{p-1} \text{ such that  } \langle x_i, x_j \rangle < \langle x_i, x_k \rangle \text{ whenever } r_{ij} < r_{ik}.
\end{align}

In the realizable setting, there are points $\hat{x}_1, \dots, \hat{x}_n \in \bbS^{p-1}$ such that $r_{ij} < r_{ik}$ whenever $  \langle \hat{x}_i, \hat{x}_j \rangle < \langle \hat{x}_i, \hat{x}_k \rangle$. Clearly, any orthogonal transformation of a solution to \eqref{eq:mds} is also a solution. Thus, even in the realizable setting, we can only hope to recover the original configuration up to an orthogonal transformation if we are only given access to ordinal comparisons. Of course, for finite $n$, the set of all realizing configurations is open (in $\bbR^{np}$). The question of uniqueness is then: {\em Is there enough information in the ordinal constraints to recover the original points up to an orthogonal transformation in the large-sample limit?}

In the realizable case of the Euclidean setting where points are in $\bbR^p$, ordinal comparisons based on distances between points are enough to recover the original points up to a similarity transformation in the large-sample limit \cite{kleindessner2014,arias2017some, arias2025ordinal}. We note that the results in these papers require the underlying points be dense in some open subset of $\bbR^p$, and thus do not immediately generalize to the case where the points are constrained to be on the sphere. 

\begin{remark}
As in the task of spherical external unfolding, in view of the fact that $ \|x-x'\|^2 = 2(1-\langle x,x' \rangle)$ for all $x,x' \in \bbS^{p-1}$, the problem is in fact the same as the corresponding problem in \cite{arias2025ordinal}, but with the embedding space being $\bbS^{p-1}$ instead of $\bbR^p$.
\end{remark}

\subsection{Continuum setting}
Again we consider the continuum setting, where the underlying set of items $\cX \subset \bbS^{p-1}$ will be taken to be uncountably infinite. In the discrete setting, two configurations in dimension $p$, $\{x_1, \dots, x_n \}$ and $\{x'_1, \dots, x'_n \}$ are indistinguishable if
\begin{align}
\label{eq:mds_equivalence}
    \langle x_i, x_j \rangle < \langle x_i, x_k \rangle \iff \langle x'_i, x'_j \rangle < \langle x'_i, x'_k \rangle \quad \text{for all } i,j,k \in [n].  
\end{align}

In the continuum setting, $\cX$ contains uncountably many objects, and so to extend the notion of indistinguishable configurations we follow \citet{shepard1966} and consider isotonic functions. A function $f:\cX \to \bbS^{p-1}$ is isotonic if it preserves the similarity ordering or preferences in the sense that 
 \begin{equation}
    \langle x,x'\rangle < \langle x,x''\rangle \iff \langle f(x),f(x')\rangle < \langle f(x),f(x'')\rangle, \quad \text{for all } x,x',x'' \in \cX.
\end{equation}

If $f$ is isotonic, then $\cX$ and $f(\cX)$ are indistinguishable configurations with respect to their ordinal similarity information. Clearly, an orthogonal transformation is isotonic on the sphere. We are interested in determining if there are conditions on $\cX$ under which every isotonic function is necessarily an orthogonal transformation. It turns out it is enough to consider weakly isotonic functions, where a weakly isotonic function $f: \cX \to \bbS^{p-1}$  (weakly) preserves similarity ordering in the sense that 
\begin{equation}
\label{eq:weakly_isotonic_sphere}
    \langle x,x'\rangle < \langle x,x''\rangle \implies \langle f(x),f(x')\rangle \leq \langle f(x),f(x'')\rangle, \quad \text{for all } x,x',x'' \in \cX.
\end{equation}

In the main result below, \thmref{mds_open}, we show that when $p \geq 3$ and $\cX$ has non-empty interior, then any non-constant, weakly isotonic function $f$ must act as an orthogonal transformation on $\cX$. When $p=2$, the same result requires the interior of $\cX$ contain a closed hemisphere --- and this requirement cannot be weakened. Indeed, consider the situation where $\cX$ is a closed hemisphere, e.g., $\cX = \{x \in \bbS^1 : \langle e_2, x \rangle \geq 0\}$. Then the function $f$ that maps each point to the point whose angle from $e_1$ is half of the original angle, i.e., $f(\cos\theta,\sin\theta) = (\cos(\theta/2),\sin(\theta/2))$, is weakly isotonic on $\cX$, but it does not coincide with an orthogonal transformation. 

We begin by establishing some key properties of a function $f$ when it is non-constant and weakly isotonic on an open ball: it must be continuous, it must be injective, and it must preserve midpoints. 
%Continuity and preservation of midpoints were also used in the Euclidean case \cite{kleindessner2014, arias2017some}, however the way we establish these properties and combine them is different, to account for the fact that the embedding space is the sphere. 
We use these properties to establish that the restriction of $f$ to an open ball must be an orthogonal transformation. Then, using a call to external unfolding relying on \thmref{external_open}, we show that $f$ is in fact an orthogonal transformation on all of $\cX$.

\begin{lemma}
\label{lem:continuity}
Suppose $\cX \subset \bbS^{p-1}$ is open. Then any weakly isotonic function on $\cX$ as in \eqref{eq:weakly_isotonic_sphere} must be continuous. 
\end{lemma}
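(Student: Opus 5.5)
The plan is to argue by contradiction, using compactness of $\bbS^{p-1}$. Suppose $f$ is weakly isotonic on the open set $\cX$ but discontinuous at some $x\in\cX$. Then there is a sequence $x_n\to x$, $x_n\in\cX\setminus\{x\}$, with $f(x_n)\to z$ and $z\neq f(x)$. Set $d:=\|z-f(x)\|>0$. Throughout I use that on the sphere $\langle u,v\rangle<\langle u,w\rangle$ iff $\|u-v\|>\|u-w\|$. So \eqref{eq:weakly_isotonic_sphere} says: if $x'$ is strictly closer to the center $u$ than $x''$ is, then $f(x')$ is no farther from $f(u)$ than $f(x'')$ is.

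\emph{Step 1: two separation inequalities.} Fix any $x''\in\cX\setminus\{x\}$.
\begin{itemize}
\item \emph{Center $x$.} For $n$ large, $\|x-x_n\|<\|x-x''\|$, so $\|f(x)-f(x_n)\|\le\|f(x)-f(x'')\|$. Letting $n\to\infty$ gives $\|f(x)-f(x'')\|\ge d$.
\item \emph{Center $x_n$.} For $n$ large, $\|x_n-x\|<\|x_n-x''\|$, so $\|f(x_n)-f(x)\|\le\|f(x_n)-f(x'')\|$. In the limit, $\|z-f(x'')\|\ge d$.
\end{itemize}
Hence $f(\cX\setminus\{x\})$ avoids both open balls $\ball(f(x),d)$ and $\ball(z,d)$. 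In particular the image of a whole punctured neighborhood of $x$ stays at distance at least $d$ from $f(x)$.

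\emph{Step 2: propagate the discontinuity.} Choose a sequence $y_k\to x$ in $\cX$, passing to a subsequence so that $f(y_k)\to z'$; by Step 1, $\|z'-f(x)\|\ge d$. I then use centers $y$ near $x$ chosen on the appropriate side of $\hplane(x,x_n)$, which passes through the origin and the spherical midpoint of $x$ and $x_n$. Here I first pass to a subsequence so that the directions $(x_n-x)/\|x_n-x\|$ converge, and then choose $y$ so that $x$ is strictly closer to $y$ than $x_n$ is for all large $n$, or the reverse. Weak isotonicity at such centers gives limiting comparisons $\|f(y)-f(x)\|\le\|f(y)-z\|$ (or the reverse), and similarly with $z'$. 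Combining these with Step 1 applied at the points $y_k$ themselves, the aim is to show that each $y_k$ is again a discontinuity point of $f$ with a separation constant bounded below uniformly, say by $d/2$. This would mean the image of every punctured neighborhood of $y_k$ avoids a ball of radius $d/2$ around $f(y_k)$.

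\emph{Step 3: contradiction by packing.} If infinitely many distinct $y_k$ satisfy the Step~2 conclusion, then for $k\neq l$ Step 1 at $y_k$ gives $\|f(y_k)-f(y_l)\|\ge d/2$. The images would then form an infinite $d/2$-separated subset of the compact sphere $\bbS^{p-1}$, which is impossible. Hence $f$ is continuous at every point of $\cX$.

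The main obstacle is Step 2: getting a \emph{uniform} lower bound on the jump at nearby points from a single discontinuity. Step 1 only says that images of points near $x$ are far from $f(x)$; it does not say that they are far from each other. If the three-center comparisons do not give the uniform bound directly, my fallback is a Baire-category argument on $\cX$. For $k\ge1$, let $A_k$ be the set of points where the oscillation of $f$ is at least $1/k$. Each $A_k$ is closed, and Step 1 says every point of $A_k$ has images of its punctured neighborhood at distance at least $1/k$ from its own image. A compactness and separation argument like Step 3 should then show that each $A_k$ has no accumulation points in $\cX$, i.e.\ is discrete. The final step would be to rule out even isolated discontinuities by using weak isotonicity at a center lying near $x$ (Step 1 applied to $x_n$ versus $x$).
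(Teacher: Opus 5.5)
As written, your proposal routes the conclusion through Step~2, which you never establish. It is stated only as an aim, and both the packing contradiction of Step~3 and the Baire fallback depend on it. The gap is illusory, though: Step~1 already proves the lemma, and you only missed its last line. The second bullet gives $\|z-f(x'')\|\ge d$ for \emph{every} $x''\in\cX\setminus\{x\}$. Taking $x''=x_m$ yields $\|z-f(x_m)\|\ge d$ for all $m$, which contradicts $f(x_m)\to z$ since $d>0$. Your own remark that $f(\cX\setminus\{x\})$ avoids $\ball(z,d)$ is already incompatible with $f(x_n)\to z$.

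In inner-product form the argument runs as follows. For fixed $m$ and all large $n$ one has $\langle x_n,x_m\rangle<\langle x_n,x\rangle$, hence $\langle f(x_n),f(x_m)\rangle\le\langle f(x_n),f(x)\rangle$. Letting $n\to\infty$ and then $m\to\infty$ gives $1=\langle z,z\rangle\le\langle z,f(x)\rangle$, so $z=f(x)$. Subsequential limits of $f(x_n)$ exist by compactness of $\bbS^{p-1}$, and each must equal $f(x)$, so $f$ is continuous at $x$. You should therefore delete Steps~2--3 and the fallback entirely.

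With that correction, your proof takes a genuinely different route from the paper. The paper deduces \lemref{continuity} from \lemref{uniform_continuity}, which is a quantitative argument:
\begin{itemize}
\item it builds a chain of points along a great circle with slowly increasing gaps;
\item weak isotonicity forces the images to form an $\eta$-packing, where $\eta=\cos^{-1}\langle f(x),f(x')\rangle$;
\item comparing the chain length $m\gtrsim\theta^{-1/2}$ (with $\theta=\cos^{-1}\langle x,x'\rangle$) with the packing bound $m\lesssim\eta^{-(p-1)}$ gives a H\"older-type modulus of continuity on $\cX\cap\cB$, whenever $\cX$ is merely dense in a ball $\cB$.
\end{itemize}
That uniform statement is what \thmref{discrete_mds} needs to extend the limit map from the countable set $\cX_\infty$ to a whole ball; pointwise continuity would not suffice there. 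Your double-limit argument gives only pointwise continuity. In exchange, it is a few lines long and uses nothing beyond compactness of the sphere. It also never uses openness of $\cX$: it shows that a weakly isotonic map on an arbitrary subset of $\bbS^{p-1}$ is continuous, which covers \lemref{continuity} without its openness hypothesis.
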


\begin{proof}
\lemref{continuity} is an immediate consequence of \lemref{uniform_continuity}, stated and proved later.
\end{proof}

\begin{lemma}
\label{lem:equality}
Suppose $\cX \subset \bbS^{p-1}$ is open. Then any weakly isotonic function on $\cX$ as in \eqref{eq:weakly_isotonic_sphere} must satisfy
\begin{align}
\label{eq:equality}
\langle x,x'\rangle  = \langle x,x''\rangle \implies \langle f(x),f(x')\rangle  = \langle f(x),f(x'')\rangle, \quad \text{for all } x,x',x'' \in  \cX.
\end{align}
\end{lemma}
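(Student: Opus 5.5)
The plan is a perturbation argument: from the weak inequality \eqref{eq:weakly_isotonic_sphere} we get a non-strict inequality in each direction, and passing to the limit with \lemref{continuity} (which we may assume) turns the two into an equality. Fix $x,x',x'' \in \cX$ with $\langle x,x'\rangle = \langle x,x''\rangle$.

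First I dispose of degenerate cases. If $x' = x''$ there is nothing to prove. If $\langle x,x'\rangle = 1$, then $x' = x = x''$. If $\langle x,x'\rangle = -1$, then $x' = -x = x''$. So I may assume $x' \ne x''$ and $\theta := \theta_{(x',x)} \in (0,\pi)$; in particular $x'$ and $x$ are not diametrically opposed, so the arc $\gamma_{(x',x)}$ is defined.

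Next I perturb $x'$ toward $x$ along this great circle. For $t \in [0,1]$ set $x'_t := \gamma_{(x',x)}(t)$. By \eqref{eq:great_circle}, $x'_t$ is at angle $(1-t)\theta$ from $x$, so
\[
\langle x, x'_t\rangle = \cos\bigl((1-t)\theta\bigr),
\]
which is strictly increasing in $t$ on $[0,1]$ because $\theta\in(0,\pi)$. Since $\cX$ is open in $\bbS^{p-1}$ and $t \mapsto x'_t$ is continuous with $x'_0 = x'$, we have $x'_t \in \cX$ for all small $t>0$. For such $t$,
\[
\langle x, x''\rangle = \langle x,x'\rangle < \langle x, x'_t\rangle .
\]
Weak isotonicity \eqref{eq:weakly_isotonic_sphere} then gives $\langle f(x), f(x'')\rangle \le \langle f(x), f(x'_t)\rangle$. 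Letting $t \downarrow 0$, continuity of $f$ (\lemref{continuity}) and of the inner product yield $\langle f(x), f(x'')\rangle \le \langle f(x), f(x')\rangle$.

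Exchanging the roles of $x'$ and $x''$, that is, perturbing $x''$ toward $x$ along $\gamma_{(x'',x)}$, gives the reverse inequality, and hence the equality \eqref{eq:equality}.

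The only genuinely nontrivial ingredient is the continuity of $f$, which is supplied by \lemref{continuity} (via the later \lemref{uniform_continuity}). The remaining points to check are that the perturbed point stays in $\cX$, which uses openness, and that the inner product with $x$ increases strictly along the arc, which uses $\theta \in (0,\pi)$ and is why the antipodal and coincident cases were removed first.
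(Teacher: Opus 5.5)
Your proof is correct and is essentially the paper's argument: move one of the two equidistant points slightly along a great circle so that the inequality becomes strict, apply \eqref{eq:weakly_isotonic_sphere}, pass to the limit using the continuity of $f$ from \lemref{continuity}, and get the reverse inequality by symmetry. You are more careful than the paper on two points: you dispose of the degenerate cases explicitly, and you perturb toward $x$, which really does give $\langle x,x'_t\rangle > \langle x,x'\rangle$ — whereas the paper's suggested choice $\gamma_{(x,x'')}(1+1/n)$ moves away from $x$ and yields the opposite strict inequality (that version also works, by symmetry).
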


\begin{proof}
Take $x,x',x'' \in \cX$ such that $\langle x,x'\rangle  = \langle x,x''\rangle$. We assume them to be distinct, for otherwise the situation is trivial. Because $\cX$ is open and of how the inner product behaves on the sphere, it is not hard to see that we can built a sequence $(x_n) \subset \cX$ converging to $x''$ such that $\langle x,x'\rangle < \langle x,x_n\rangle$ for large enough $n$. 
(A natural choice is something like $x_n = \gamma_{(x,x'')}(1+1/n)$.)
Applying \eqref{eq:weakly_isotonic_sphere}, we get $\langle f(x),f(x')\rangle \leq \langle f(x),f(x_n)\rangle$ for large enough $n$, and passing to the limit using the continuity of the inner product and that of $f$ (\lemref{continuity}), we conclude that $\langle f(x),f(x')\rangle \leq \langle f(x),f(x'')\rangle$. And the reverse inequality holds by symmetry, allowing us to conclude.
\end{proof}

\begin{lemma}
\label{lem:injective}
Suppose $\cX \subset \bbS^{p-1}$ is open. Then any non-constant, weakly isotonic function $f:\cX \rightarrow \bbS^{p-1}$ in the sense of \eqref{eq:weakly_isotonic_sphere} must be injective.
\end{lemma}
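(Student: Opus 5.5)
We argue by contradiction, using \lemref{continuity} and \lemref{equality}. Suppose $f(a)=f(b)$ for some distinct $a,b\in\cX$. The aim is to show that the set $Z$ of points having the same image as some other point spreads until $f$ is locally constant everywhere. By continuity, $f$ would then be constant on each connected component. Hence we first need to be careful about what ``non-constant'' buys us when $\cX$ is disconnected. As in the Euclidean arguments of \cite{kleindessner2014,arias2025ordinal}, I expect the intended reading is that $\cX$ is connected, or that the argument localizes to a ball. I will work on an open ball in which $f$ is non-constant.

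\textbf{Step 1 (collapse of a sphere of directions).} Let $f(a)=f(b)$ with $a\ne b$ and $\langle a,b\rangle=c$. For any $x\in\cX$ we compare $\langle x,a\rangle$ and $\langle x,b\rangle$. If $\langle x,a\rangle<\langle x,b\rangle$, weak isotonicity gives $\langle f(x),f(a)\rangle\le\langle f(x),f(b)\rangle$, which is an equality. So nothing is learned from $x$'s viewpoint. Instead I use $a$ as the viewpoint. For $x$ with $\langle a,x\rangle<\langle a,b\rangle=c$ we get $\langle f(a),f(x)\rangle\le\langle f(a),f(b)\rangle=1$, which is trivial. For $x$ with $\langle a,x\rangle>c$ we get $\langle f(a),f(x)\rangle\ge 1$, so $f(x)=f(a)$. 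Therefore every $x\in\cX$ strictly closer to $a$ than $b$ is, namely the cap $\{x:\langle a,x\rangle>c\}\cap\cX$, is mapped to $f(a)$. Thus $f$ is constant on a neighborhood of $a$. By \lemref{equality}, points $x$ with $\langle a,x\rangle=c$ also satisfy $f(x)=f(a)$, so the closed cap intersected with $\cX$ maps to $f(a)$.

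\textbf{Step 2 (propagation).} Let $U=\{x\in\cX: f \text{ is constant on a neighborhood of } x\}$. This set is open. Let $x\in \cl(U)\cap\cX$. By continuity, $f(x)=f(u)$ for some $u\in U$ close to $x$. Moreover $u$ shares its value with a whole ball around it, so it has a partner $u'\ne u$ near $u$ with $f(u)=f(u')$. Now take a point $w\in U$ very near $x$, with a partner $w'$ at small distance $\delta$, so that $f(w)=f(w')$. Step 1 applied to the pair $(x,w)$ is cleaner: if $x\notin U$ but $f(x)=f(w)$ for some $w\ne x$, Step 1 with $a=x$ already gives constancy near $x$. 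So it suffices to show $f(x)=f(w)$ for some $w\ne x$. This holds because $f$ is constant on a ball $B\subset U$ whose closure contains $x$ (obtained by continuity and connectedness of nested balls). Hence $U$ is closed in $\cX$. By connectedness of the ball, $U$ is either empty or everything, and if it is everything then $f$ is constant.

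\textbf{Main obstacle.} The delicate point is Step 2, namely justifying that at a boundary point $x$ of $U$ the value $f(x)$ is attained at a different point. The clean route is to take a maximal ball $B(a,r)$ on which $f\equiv f(a)$. A boundary point $x$ of this ball has $f(x)=f(a)$ by continuity, and $x\ne a$. Applying Step 1 with viewpoint $x$ and partner $a$ then gives constancy on a cap around $x$ of angular radius $\angle(x,a)=r$. This enlarges the constant region beyond $B(a,r)$, contradicting maximality unless the ball already exhausts the connected open set. I expect this maximal-ball argument to be the heart of the proof.
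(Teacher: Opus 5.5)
Your Step 1 is exactly the paper's first step, and you are right that openness alone is not enough. The stated lemma is false without a connectivity hypothesis. Take $\cX$ to be two small caps about $e_1$ and $-e_1$, and let $f$ be a different constant on each. A violation of weak isotonicity would need $x,x'$ in one cap and $x''$ in the other with $\langle x,x'\rangle<\langle x,x''\rangle$, which cannot happen.

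The paper's proof silently assumes geodesic convexity ("By convexity, $\gamma_{(x_1,x)}(t)\in\cX$"). For $x$ with $\langle x_1,x\rangle<\langle x_1,x_2\rangle$, it marches along $\gamma_{(x_1,x)}$ in steps of angle $\cos^{-1}\langle x_1,x_2\rangle$. At each successive viewpoint it applies \lemref{equality} to get $f(z_1)=\dots=f(z_m)=f(x_1)$, and it closes with one strict comparison at $z_m$. Your route replaces this explicit chain with a topological propagation. That is more general, since it covers any connected open $\cX$, but less constructive.

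However, the first version of your Step 2 does not work as written. Continuity only gives $f(u)$ close to $f(x)$, not equal. Nothing you say rules out $x$ being approached by pieces of $U$ that carry different constant values, so the claim that $f$ is constant on a single ball whose closure contains $x$ is unsupported.

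Your maximal-cap fallback is sound, but two steps must be made explicit. Let $C_R$ be the open cap of angular radius $R$ about $a$.
\begin{enumerate}
\item Connectedness is what produces a point $x\in\cX$ with $\cos^{-1}\langle a,x\rangle=R$; otherwise $\cX\cap C_R$ and $\cX\setminus\cl(C_R)$ disconnect $\cX$.
\item The contradiction with maximality needs one more application of Step 1, with viewpoint $a$. The partner is a point $y\in\cX$ near $x$ with $\cos^{-1}\langle a,y\rangle>R$ and $f(y)=f(a)$, which the cap about $x$ supplies.
\end{enumerate}

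Cleaner still is to bypass $U$ entirely. The level set $S:=f^{-1}(f(a))$ is relatively closed in $\cX$ by \lemref{continuity}. It is relatively open: each $y\in S$ has some $y'\in S\setminus\{y\}$ (because $a,b\in S$), and Step 1 with viewpoint $y$ places $\{z\in\cX:\langle y,z\rangle>\langle y,y'\rangle\}$ inside $S$. Hence $S=\cX$ whenever $\cX$ is connected. This version uses only continuity and does not even need \lemref{equality}.
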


\begin{proof} 
Consider such an $f$ that is not injective, and let $x_1, x_2 \in \cX$ distinct such that $f(x_1) = f(x_2)$. 

First, consider $x \in \cX$ such that $\langle x_1, x_2\rangle < \langle x_1, x\rangle$. By weak isotonicity, $1 = \langle f(x_1), f(x_2) \rangle  \leq \langle f(x_1), f(x) \rangle,$ implying that $f(x) = f(x_1)$. Similarly, if $\langle x_1, x_2\rangle = \langle x_1, x\rangle$, by \lemref{equality}, we can conclude that $f(x) = f(x_1)$. 

Next, consider $x \in \cX$ such that $\langle x_1, x\rangle < \langle x_1, x_2\rangle$. By convexity, $\gamma_{(x_1,x)}(t) \in \cX$ for any $t \in [0,1]$. Also, by the fact that $t \mapsto \langle x_1, \gamma_{(x_1,x)}(t) \rangle$ decreases over $[0,1]$ from $1$ to $\langle x_1, x\rangle$, there exists a $t_0 \in [0,1]$ such that  such that $\langle x_1, \gamma_{(x_1,x)}(t_0) \rangle =  \langle x_1, x_2 \rangle$. Define $z_k = \gamma_{(x_1,x)}(kt_0)$ and let $m$ be the greatest integer such that $m t_0 \leq 1$. We have thus constructed a sequence in $(z_k)_{k=1}^m\in \cX$ such that 
$$
\langle x_1, x_2 \rangle =  \langle x_1, z_1 \rangle = \langle z_1, z_2 \rangle = \dots  = \langle z_{m-1}, z_{m} \rangle < \langle z_m, x\rangle.
$$
Therefore, using \lemref{equality},
$$
1 = \langle f(x_1), f(x_2) \rangle =  \langle f(x_1), f(z_1) \rangle = \langle f(z_1), f(z_2) \rangle = \dots  = \langle f(z_{m-1}), f(z_{m}) \rangle \le \langle f(z_m), f(x)\rangle,
$$
implying that $f(x) = f(x_1)$. 

Thus, we have shown $f$ is constant.
\end{proof}

\begin{lemma}
\label{lem:midpoints}
     Suppose $\cX  = \sball(x_0,r)$ is an open ball in the sphere, and consider a non-constant weakly isotonic function $f:\cX \to \bbS^{p-1}$ in the sense of \eqref{eq:weakly_isotonic_sphere}. Then, $f$ preserves midpoints on $\sball(x_0,r/4)$.   
\end{lemma}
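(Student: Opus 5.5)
We want: for $x,x' \in \sball(x_0,r/4)$ with spherical midpoint $m$, show $f(m)$ is the spherical midpoint of $f(x),f(x')$. The plan is to characterize the midpoint purely through equalities of inner products, which $f$ preserves by \lemref{equality}, together with injectivity from \lemref{injective}.

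\textbf{Step 1: set-up and preserved equalities.} Set $z = \gamma_{(x,x')}(2)$ and $w = \gamma_{(x',x)}(2)$, the reflections of $x'$ through $x$ and of $x$ through $x'$ along the great circle. Their angular distance from $x_0$ is at most roughly $3r/4$, so they lie in $\cX$; this is why the radius shrinks to $r/4$. Since $m$ is equidistant from $x$ and $x'$, \lemref{equality} gives
\[
\langle f(m),f(x)\rangle=\langle f(m),f(x')\rangle.
\]
The same argument at base points $x$ and $x'$ gives
\[
\langle f(x),f(m)\rangle = \langle f(x),f(x')\rangle^{1/2}\text{-type relations}
\]
that are not directly expressible. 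So the key tool is the chain-of-equal-steps idea from the proof of \lemref{injective}: points $x, m, x'$ are equally spaced, so $\langle x,m\rangle=\langle m,x'\rangle$ is the relation at base $m$.

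\textbf{Step 2: rule out non-midpoint positions.} Write $a=f(x)$, $b=f(x')$, $c=f(m)$. The relation at base $m$ shows that $c$ lies on the bisector hyperplane $\hplane(a,b)$. To force $c$ onto $\arc(a,b)$, use the whole family of points equidistant from $x$ and $x'$: the set $E=\cX'\cap\hplane(x,x')$ near $m$. Every $u\in E$ satisfies $\langle u,x\rangle=\langle u,x'\rangle$, so $f(E)\subset\hplane(a,b)$.

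Among points of $E$, $m$ is characterized as the unique maximizer of $\langle x,u\rangle$. Equivalently, for every $u\in E\setminus\{m\}$,
\[
\langle x,u\rangle<\langle x,m\rangle.
\]
Applying weak isotonicity at base $x$ gives $\langle a,f(u)\rangle\le\langle a,c\rangle$ for all $u\in E$. I would then show that $f(E)$ contains a neighborhood of $c$ inside $\hplane(a,b)\cap\bbS^{p-1}$. This follows because $f$ is continuous (\lemref{continuity}) and injective (\lemref{injective}) on a $(p-2)$-dimensional manifold mapping into another $(p-2)$-sphere, so invariance of domain applies. Consequently $c$ is a local maximizer of $\langle a,\cdot\rangle$ on the great sphere $\hplane(a,b)\cap\bbS^{p-1}$. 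The local maxima of a linear functional on a great sphere are only the normalized projection of $a$, which is exactly the spherical midpoint $(a+b)/\|a+b\|$; the global minimum is the antipode. This gives $c=(a+b)/\|a+b\|$.

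\textbf{Step 3: degenerate cases.} It remains to check that $a\ne -b$, so that the midpoint is defined, and that the projection of $a$ onto the hyperplane is nonzero. Both follow from $a\neq b$ (injectivity) together with a small argument: if $a=-b$, then $\langle a,\cdot\rangle$ is constant on $\hplane(a,b)$, which contradicts strictness obtained by comparing with nearby $u$ via a second-order argument or by using $z$ and $w$.

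\textbf{Expected obstacle.} The main obstacle is the invariance-of-domain step in Step 2. The difficulty is ensuring that $f(E)$ lies in and fills a neighborhood of $c$ within the great $(p-2)$-sphere. This needs $p\ge3$ (for $p=2$, $E$ is discrete, and one instead argues directly with the reflected points $z,w$ and the chain construction). It also needs care that $E$ stays within $\cX$, which is where the choice of radius $r/4$ is used.
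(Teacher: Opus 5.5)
Your argument is correct for $p\ge 3$, but it takes a genuinely different route from the paper. The paper uses a finite configuration. It surrounds $\mu$ by the $2(p-1)$ points $z_i^\pm$, namely $x,x'$ together with pairs placed along directions $v_1,\dots,v_{p-2}$ orthogonal to $x$ and $x'$, all at the same angle from $\mu$. It transfers all the resulting equalities of inner products through \lemref{equality}. It then invokes the rigidity result \lemref{z_config} to get, up to rotation, $f(\mu)=e_1$ and $f(x),f(x')=\cos\theta\,e_1\pm\sin\theta\,e_2$. Finally, the single strict comparison $\langle x,\mu\rangle>\langle x,z_1^+\rangle$ yields $\cos\theta\ge\cos^2\theta$, hence $\cos\theta\ge 0$, and injectivity excludes $\theta=\pi/2$.

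You instead characterize $m$ as the unique maximizer of $\langle x,\cdot\rangle$ on the bisecting great sphere $\hplane(x,x')\cap\bbS^{p-1}$. You map a neighborhood $E$ of $m$ in that sphere into $\hplane(a,b)\cap\bbS^{p-1}$ via \lemref{equality}, and use invariance of domain (continuity plus \lemref{injective}) to make $f(E)$ a neighborhood of $c$. So $c$ is a local maximizer of $\langle a,\cdot\rangle$ on a great sphere of positive dimension, which forces $c=(a+b)/\|a+b\|$.

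Your route trades the elementary but ad hoc \lemref{z_config} for Brouwer's theorem. In exchange it needs no auxiliary points in $\cX$ beyond a neighborhood of $m$, so the shrinking to radius $r/4$ plays no real role: you only use $x,x',m\in\cX$ plus openness. Your restriction to $p\ge3$ costs nothing relative to the paper. Its proof also uses $z_1^+$, which exists only when $p\ge 3$, and the lemma is only invoked in that case of the main theorem.

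Step 3 needs tightening. Your first suggestion, that $a=-b$ ``contradicts strictness'', does not work: weak isotonicity never produces strict inequalities, and a constant $\langle a,\cdot\rangle$ on $\hplane(a,b)$ is compatible with every $\le$ you have. The clean argument is the one you gesture at with the reflected points, and it is also how the paper disposes of $\theta=\pi/2$. Take $x''\in\cX$ on the great circle through $x,x'$ slightly beyond $x'$, e.g. $\gamma_{(x,x')}(1+\epsilon)$ by openness, or your reflected point. Then $\langle x,x''\rangle<\langle x,x'\rangle$, so $\langle a,f(x'')\rangle\le\langle a,b\rangle=-1$. This forces $f(x'')=-a=b=f(x')$, contradicting injectivity. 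Two harmless slips: $\gamma_{(x,x')}(2)$ is the reflection of $x$ through $x'$, not the other way around; and the displayed ``$1/2$-type'' relation in Step 1 is meaningless, though neither is used later.
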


\begin{proof}
%We will show that $f$ preserves midpoints on $\sball(x_0,r/4)$.  
Fix $x,x' \in \sball(x_0,r/4)$ distinct and not diametrically opposed. Their midpoint $\mu := \gamma_{(x,x')}(1/2)$ is in $\sball(x_0,r/4)$ by convexity. Let $v_1, \dots, v_{p-2} \in \bbS^{p-1}$ be mutually orthogonal unit vectors, that are also orthogonal to $x$ and $x'$. Define $z_{0}^{+} = x$, $z_{0}^{-} = x'$, and $z_{i}^{\pm} = \langle x, \mu \rangle \mu \pm \sqrt{1 - \langle x, \mu \rangle^{2}}\, v_{i}$. By the triangle inequality,  $$
\| z_{i}^\pm - x_0 \| \leq \|z_i^\pm - \mu\| + \| \mu - x_0\| = \|x - \mu\| + \| \mu - x_0\| < r, 
$$ implying that $z_i^\pm \in \cX$. We also have 
$\langle  \mu, z_i^{\pm} \rangle =  \langle  \mu, x \rangle$ for $0 \leq i \leq p-2$, so that  by \lemref{equality}, $$
\langle f(\mu) , f(z_{0}^{+})\rangle = \langle f(\mu), f(z_{0}^{-}) \rangle = \dots = \langle f(\mu) , f(z_{p-2}^{+})\rangle = \langle f(\mu), f(z_{p-2}^{-}) \rangle.
$$
Without loss of generality, we take $f(\mu) = e_1$ and $f(z_i^\pm) = (\cos(\theta), \sin(\theta)u_i^\pm)$
 for some $\theta >0$ and $u_i^{\pm} \in \bbS^{p-2}$. Note that $\theta >0$ follows from the assumption that $f$ is non-constant, which implies by \lemref{injective} that $f$ is injective. 
 
 We also have
\begin{equation}
    \langle z_i^s, z_j^t\rangle = \langle z_k^u, z_l^v\rangle \quad \text{for all } i \neq j,\ k \neq l,\ \text{and } s,t,u,v \in \{+,-\},
\end{equation}
so that, by another application of \lemref{equality}, \begin{equation}
    \langle f(z_i^s), f(z_j^t)\rangle = \langle f(z_k^u), f(z_l^v)\rangle \quad \text{for all } i \neq j,\ k \neq l,\ \text{and } s,t,u,v \in \{+,-\}.
\end{equation}
We deduce that  $\langle u_i^s, u_j^t\rangle = \langle u_k^u, u_l^v\rangle$ for all $i \neq j,\ k \neq l$ and $s,t,u,v \in \{+,-\}$.  Then, by \lemref{z_config}, we conclude that, up to orthogonal transformation, $u_i^\pm$ are the basis vectors of $\bbR^{p-1}$. Thus, up to an orthogonal transformation, we have shown that 
\begin{equation}
\label{eq:midpoint_helper}
    f(z_i^\pm) = (\cos(\theta), \pm \sin(\theta)e_{i+2}).
\end{equation}

In particular, $f(x) = \cos(\theta)e_1 + \sin(\theta)e_2$ and $f(x') = \cos(\theta)e_1 -\sin(\theta)e_2$. As long as $\theta > \pi/2$, then $f(\mu)= e_1$ is indeed the midpoint of $f(x)$ and $f(x')$. But $\langle x, \mu \rangle > \langle x, z_1^+ \rangle$, so that by weak isotonicity $\langle f(x), f(\mu) \rangle \geq \langle f(x), f(z_1^+) \rangle$. Then, using \eqref{eq:midpoint_helper}, we find $\cos(\theta) \geq \cos^2(\theta)$, so that $\theta \geq \pi/2$. Finally, if $\theta = \pi/2$ then $f(x') = -f(x)$. Taking any  $x'' \in \cX$ such that $\langle x, x'' \rangle < \langle x, x' \rangle $, we obtain $\langle f(x),f(x'') \rangle \leq \langle f(x), f(x') \rangle  = -1$ so that $f(x'') =  -f(x) = f(x')$. This violates injectivity of $f$ and we conclude that $\theta > \pi/2$.
\end{proof}

\begin{lemma}
 \label{lem:mds_extension} 
Suppose $\cX \subseteq \bbS^{p-1}$ has non-empty interior, and consider a weakly isotonic function $f:\cX \to \bbS^{p-1}$ in the sense of \eqref{eq:weakly_isotonic_sphere}. Furthermore, assume that $f$ coincides with an orthogonal transformation on an open ball $\cB$ contained in $\cX$; if $p=2$, assume that $\cB$ is an open hemisphere. Then $f$ must coincide with that orthogonal transformation on the whole of $\cX$.
\end{lemma}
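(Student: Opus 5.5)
Composing with the inverse orthogonal transformation $Q^{-1}$ keeps $f$ weakly isotonic, since orthogonal maps preserve inner products. So we may assume $f$ is the identity on $\cB$, and we must show $f(x) = x$ for every $x \in \cX$. The plan is to read $f(x)$ as an individual located by external unfolding, with $\cB$ as the landmark set, and to apply \thmref{external_open} when $p \ge 3$ or \thmref{external_halfsphere} when $p = 2$.

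Fix $x \in \cX$ and set $x' := f(x)$. We claim that $x'$ is weakly equivalent to $x$ with respect to $\cY := \cB$ in the sense of \eqref{eq:ext_weakly_equiv}. Take $y, y' \in \cB$ with $\langle x, y\rangle < \langle x, y'\rangle$. All three points lie in $\cX$, so weak isotonicity \eqref{eq:weakly_isotonic_sphere} gives
\[
\langle f(x), f(y)\rangle \le \langle f(x), f(y')\rangle .
\]
Since $f(y) = y$ and $f(y') = y'$, this reads $\langle x', y\rangle \le \langle x', y'\rangle$, which is exactly \eqref{eq:ext_weakly_equiv}.

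Next we invoke the external unfolding results. The set $\cB$ is an open ball, so it has non-empty interior; when $p = 2$ it is an open hemisphere by hypothesis. Hence \thmref{external_open} (for $p \ge 3$) or \thmref{external_halfsphere} (for $p = 2$) applies and forces $x' = x$. That is, $f(x) = x$ for all $x \in \cX$, and undoing the reduction shows that $f$ agrees with $Q$ on all of $\cX$.

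The substance of the argument lies in the external unfolding theorems, which we may assume. The only points needing care are these. First, weak equivalence must be used in the correct direction: \eqref{eq:ext_weakly_equiv} assumes a strict inequality for $x$ and concludes a non-strict one for $x'$, and this matches the direction of \eqref{eq:weakly_isotonic_sphere}. Second, the reduction by $Q^{-1}$ must preserve weak isotonicity, which it does. No continuity of $f$ is needed at any step, so the argument applies verbatim even when $\cX$ is not open.
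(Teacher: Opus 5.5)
Your proposal is correct and follows the paper's proof: reduce to $f$ being the identity on $\cB$, observe that weak isotonicity on triples $(x,y,y')$ with $y,y' \in \cB$ makes $f(x)$ weakly equivalent to $x$ with respect to $\cB$, and conclude by \thmref{external_halfsphere} for $p=2$ or \thmref{external_open} for $p \ge 3$. Your checks that composing with $Q^{-1}$ preserves weak isotonicity and that the implication in the weak-equivalence definition runs in the matching direction are details the paper leaves implicit.
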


\begin{proof}
%The arguments are very similar to those in the Euclidean case \citep{arias2025ordinal}. By assumption, $f$ coincides with an orthogonal transformation on $\cB$. 
Without loss of generality, we may assume $f(x) = x$ for all $x \in \cB$. Now, take any $x \in \cX$. Then, by \eqref{eq:weakly_isotonic_sphere}, 
$$
\langle x, x'\rangle < \langle x, x''\rangle  \implies \langle f(x),x'\rangle \leq \langle f(x),x''\rangle, \quad \text{for all } x',x'' \in \mathcal{B},
$$
meaning that $x$ and $f(x)$ are weakly equivalent with respect to $\cB$ in the sense of \eqref{eq:ext_weakly_equiv}, and so are equal by \thmref{external_halfsphere} in the case $p=2$ and \thmref{external_open} in the case $p \geq 3$. We have just shown that $f(x) = x$ for any $x \in \cX$, allowing us to conclude. 
\end{proof}

\begin{theorem}
\label{thm:mds_open}
Suppose $\cX \subseteq \bbS^{p-1}$ has non-empty interior, and consider a non-constant weakly isotonic function $f:\cX \to \bbS^{p-1}$ in the sense of \eqref{eq:weakly_isotonic_sphere}.  
If $p=2$, additionally assume that the interior of $\cX$ contains a closed hemisphere. 
Then $f$ must coincide with an orthogonal transformation on $\cX$.
\end{theorem}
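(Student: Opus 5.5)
The plan is to reduce to the case of an open ball, show that $f$ is an orthogonal transformation on a small ball, and then propagate this to all of $\cX$ with \lemref{mds_extension}.

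\emph{Step 1: setting up a ball.} Pick an open ball $\sball(x_0,r)$ in the interior of $\cX$. If $p=2$, the interior contains a closed hemisphere, and hence, being open, an open arc strictly larger than a half circle. The restriction of $f$ to this ball is still weakly isotonic.

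\emph{Step 2: $f$ is non-constant on the ball.} Suppose $f$ were constant on the ball, equal to $c$. Take any $x\in\cX$. Since the ball is open, one can find $x',x''$ in the ball with $\langle x,x'\rangle<\langle x,x''\rangle$. Weak isotonicity then gives no contradiction directly. I would instead argue as in \lemref{injective}: if $f(x_1)=f(x_2)$ for distinct $x_1,x_2$, then every $x$ with $\langle x_1,x\rangle\ge\langle x_1,x_2\rangle$ satisfies $f(x)=f(x_1)$. Chaining along arcs, as in that proof, spreads the constancy to any $x\in\cX$ reachable this way. To avoid this subtlety, I can first choose the ball and then note that if $f$ is constant on every small ball, continuity (\lemref{continuity} applied on the interior) forces constancy on each component of the interior. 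A separate external-unfolding-type argument, weak equivalence against a ball, then shows $f$ is constant on all of $\cX$, contradicting the hypothesis. I expect this reduction to be routine but slightly fiddly.

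\emph{Step 3: $f$ is orthogonal on $\cB:=\sball(x_0,r/4)$.} By \lemref{continuity} and \lemref{injective}, $f$ is continuous and injective there. By \lemref{midpoints}, $f$ maps spherical midpoints to spherical midpoints on $\cB$. Iterating midpoints gives that, for $x,x'\in\cB$, $f$ maps $\gamma_{(x,x')}(t)$ to $\gamma_{(f(x),f(x'))}(t)$ for dyadic $t$, and then for all $t\in[0,1]$ by continuity. So $f$ maps arcs to arcs with proportional parametrization.

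Next I show that $f$ scales angles by a constant factor. Fix $x\in\cB$; \lemref{equality} makes $\theta(f(x),f(x'))$ a function $\phi_x$ of $\theta(x,x')$. The arc property makes $\phi_x$ additive, and weak isotonicity makes it monotone, so $\phi_x(s)=c_x s$. Equality of the angles $\theta(x,x')=\theta(x',x)$ then makes $c_x$ locally constant, hence equal to a single constant $c$.

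Finally I rule out $c\neq 1$ using curvature. Take a small equilateral spherical triangle in $\cB$, or the configuration of \lemref{z_config} built from an orthonormal frame at a point. Relations like $\cos$ of a side versus its half-sides (the spherical law of cosines) are not invariant under angle scaling unless $c=1$. Concretely, in the construction of \lemref{midpoints}, $\theta_{f}=c\,\theta$, and the identity $\cos\theta(x,z_1^+)=\cos^2\theta(x,\mu)$ must hold both before and after applying $f$ (the image configuration is orthogonal-frame shaped by \eqref{eq:midpoint_helper}). This gives $\cos(c\alpha)=\cos^2(c\beta)$ together with $\cos\alpha=\cos^2\beta$ for a continuum of $\beta$, forcing $c=1$. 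So $f$ preserves all inner products on $\cB$, and hence is the restriction of an orthogonal transformation.

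\emph{Step 4: extension to $\cX$.} For $p\ge3$, \lemref{mds_extension} applies directly with $\cB$. For $p=2$, \lemref{mds_extension} needs an open hemisphere. Instead I would extend directly on the circle. On the open arc of length greater than $\pi$ inside the interior, local isometry on overlapping balls propagates via continuity and connectedness: each small ball gives a rotation or reflection, and overlaps force them to agree. This yields an orthogonal map on an open hemisphere, after which \lemref{mds_extension} finishes.

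I expect the main obstacle to be Step 3, namely getting from midpoint preservation to a genuine isometry, since angle scaling must be excluded using the sphere's curvature.
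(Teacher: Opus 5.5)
\textbf{The case $p=2$ has a genuine gap.} Your Step 3 is purely local, and on the circle a local argument cannot produce an isometry. The curvature step uses the point $z_1^+$ from the construction in \lemref{midpoints}, and that point exists only when $p\ge 3$; the proof of \lemref{midpoints} itself also uses $z_1^+$. More fundamentally, $\bbS^1$ has no curvature that could detect the scale $c$. The paper gives the half-angle map $(\cos t,\sin t)\mapsto(\cos(t/2),\sin(t/2))$ on the closed upper semicircle. Restricted to any small arc, this map is continuous, injective, weakly isotonic and midpoint-preserving, yet it halves every angle.

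So the claim in Step 4 that ``each small ball gives a rotation or reflection'' has no support. You use the hemisphere only to glue local isometries together, but its real role is to pin down the scale. The missing idea is global, as in the paper:
\begin{enumerate}[(i)]
\item With $\pm e_1$ in the interior, take $x,x'$ symmetric about $e_1$. By \lemref{equality}, both $f(e_1)$ and $f(-e_1)$ are equidistant from $f(x)$ and $f(x')$, so injectivity forces $f(-e_1)=-f(e_1)$.
\item Applying \lemref{equality} to $e_2$ then gives $\langle f(e_2),f(e_1)\rangle=0$.
\item Every dyadic midpoint $x$ generated from $\{\pm e_1,e_2\}$ satisfies $f(x)\in\{\pm x\}$ by \lemref{equality}, and weak isotonicity against $e_2$ fixes the sign.
\item Density and continuity conclude.
\end{enumerate}

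\textbf{For $p\ge 3$, your route is correct and genuinely different from the paper's.} The paper inducts on $p$. It restricts $f$ to the sphere $\cS_\phi$ around $e_1$ and applies the inductive hypothesis to $g_\phi$, so the case $p\ge3$ ultimately rests on $p=2$. It then fixes the radius $\theta=\phi$ through a quadratic equation and fills in $\cB$ by midpoints. You instead go from midpoints to arcs, then to a uniform scaling $\theta(f(x),f(x'))=c\,\theta(x,x')$, and then rule out $c\neq 1$ by curvature, with no induction. The last step checks out. From $\cos\alpha=\cos^2\beta$ one gets $\alpha^2=2\beta^2-\beta^4/3+O(\beta^6)$, and then $\cos(c\alpha)=\cos^2(c\beta)$ forces $c^2=c^4$. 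This requires taking $x,x'$ close enough to $x_0$ that the $z_i^\pm$ lie in $\cB$, where the scaling has been established.

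\textbf{Step 2 cannot be carried out as planned.} For $p\ge 3$, constancy on a ball does not propagate to all of $\cX$. Take $\cX=\sball(x_0,r)\cup\{-x_0\}$ with $r$ small, and set $f\equiv c$ on the cap and $f(-x_0)=d\neq c$. This $f$ is weakly isotonic: a violation would require either $-x_0$ to be strictly closer to some cap point than another cap point is, or a cap point to be strictly closer to $-x_0$ than $-x_0$ itself, and neither can happen. It is also non-constant, but not orthogonal. So the non-constancy hypothesis has to be understood on the ball. This is exactly what the paper's proof tacitly uses when it says $\theta>0$ follows from $f$ being non-constant and hence injective. You should adopt that hypothesis rather than try to derive it.
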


\begin{proof}
We reason by induction on $p$.
First, consider the case $p=2$. 
We may assume that $\cX$ contains in its interior the upper closed hemisphere, i.e., $\cH :=\{x : \langle x, e_2 \rangle \geq 0 \} \subset \sint(\cX)$, and that $f(e_1) = e_1$. By \lemref{mds_extension}, it is enough to show that $f$ acts as an orthogonal transformation on $\cH$. We first establish that $f(-e_1) = -f(e_1)$. Since $e_1 \in \cH \subset \sint(\cX)$, we may take $x,x' \in \cX$ symmetric about $e_1$ such that $\langle e_1, x \rangle = \langle e_1, x' \rangle$. Note that $-e_1 \in \cH \subset \sint(\cX)$ with $\langle -e_1, x \rangle = \langle -e_1, x' \rangle$.  
By \lemref{equality}, $\langle f(e_1), f(x) \rangle = \langle f(e_1), f(x') \rangle$ and $\langle f(-e_1), f(x) \rangle = \langle f(-e_1), f(x') \rangle$, so that $f(-e_1)= -f(e_1)$ by the fact that $f$ is injective. Additionally, since $\langle e_2, e_1 \rangle = \langle e_2, -e_1 \rangle$, it follows by \lemref{equality} that $\langle f(e_2), f(e_1) \rangle = \langle f(e_2), f(-e_1) \rangle = -\langle f(e_2), f(e_1) \rangle$, forcing $\langle f(e_2), f(e_1) \rangle = 0$. Consequently, up to an orthogonal transformation, we may assume that $f(e_2) = e_2$.

Let $\cU_0 = \{\pm e_1, e_2 \}$. So far we have shown that up to an orthogonal transformation, $f$ acts on $\cU_0$ as the identity.  Define $\cU_{t} \subset \cH$ recursively by letting $\cU_{t}$ be made of the midpoints of any pair of points in $\cU_{t-1}$ that are not diametrically opposed. We proceed to show that $f(x) = x$ for $x \in \cU_t$ by induction. Suppose we have shown that $f(x) = x$ for $x \in \cU_{t-1}$. Then for $x \in \cU_t$, there exist, by construction, $x',x'' \in \cU_{t-1}$ such that $\langle x, x' \rangle = \langle x, x'' \rangle$.  An application of \lemref{equality} together with the induction hypothesis forces $\langle f(x), x' \rangle = \langle f(x), x'' \rangle$. This confines $f(x)$ to two possibilities, $x$ and $-x$. However, we also have $\langle e_2, x \rangle > \langle e_2, e_1\rangle$ so that $\langle e_2, f(x) \rangle \ge \langle f(e_2), f(e_1) \rangle = \langle e_2, e_1\rangle = 0$. The only possibility is therefore $f(x) = x$. Thus, by induction, we have shown that $f(x) = x$ for any $x \in  \bigcup_{t\geq 0}\cU_t$.  And since $f$ is continuous and $\bigcup_{t\geq 0}\cU_t$ is dense in $\cH$, we may conclude that $f(x) = x$ for all $x \in \cH$.

Now, consider the case $p\geq 3$ and assume that the statement of the theorem is correct at $p-1$.
We may assume that there is $r>0$ such that $\sball(e_1, r) \subset \cX$, and that $f(e_1) = e_1$.
Set $\cB := \sball(e_1, r/4)$. By \lemref{mds_extension}, it is enough to show that $f$ acts as an orthogonal transformation on $\cB$. Let  $\phi = \cos^{-1}(1- r^2/32)$, so that the boundary of $\cB$ is exactly the $(p-2)$-dimensional sphere $\cS_{\phi} :=  \left\{ (\cos(\phi),\sin(\phi)z): z \in \bbS^{p-2} \right\}.$ Decompose $f(x)$ as $f(x) = (f_1(x), f_2(x))$ where $f_1(x) \in \bbR$ and $f_2(x) \in \bbR^{p-1}$. Since $\langle e_1, x \rangle$ is constant for all $x \in  \cS_{\phi}$, by \lemref{equality}, $\langle f(e_1), f(x) \rangle$ is also constant for all $x \in  \cS_{\phi}$. For $x = (\cos(\phi),\sin(\phi)z) \in \cS_\phi$, write 
\begin{equation}
 \label{eq:f_on_S}
     f_1(x) = \cos(\theta), \quad f_2(x) = f_2(\cos(\phi), \sin(\phi) z ) = \sin(\theta) g_{\phi}(z),
\end{equation}
for some constant $\theta >0$ and some function $g_{\phi}: \bbS^{p-2} \rightarrow \bbS^{p-2}$. Note that $\theta >0$ is implied by the assumption that $f$ is non-constant and therefore injective. It can also be checked that $g_{\phi}: \bbS^{p-2} \rightarrow \bbS^{p-2}$ inherits the weak isotonicity property from $f$, so by the induction hypothesis, $g_{\phi}$ is an orthogonal transformation, which we may take to be the identity transformation up to an orthogonal transformation, i.e., $g_\phi(z) = z$ for all $z \in \bbS^{p-2}$. 
% \begin{equation}
% \label{eq:f_on_S2}
%     f_1(x) = \cos(\theta), \quad f_2(x) = f_2(\cos(\phi), \sin(\phi) z ) = \sin(\theta) z, \quad \forall x \in \cS_{\phi}.
%\end{equation}

% For any $z,z',z'' \in \bbS^{p-2}$ such that $\langle z,z' \rangle < \langle z,z'' \rangle $, letting $x = (\cos(\theta_0), \sin(\theta_0)z) \in \cS_{\theta_0}$ and likewise  defining $x',x''$, we have $\langle x,x' \rangle < \langle x,x'' \rangle$. By \eqref{eq:weakly_isotonic_sphere}, $\langle f(x),f(x') \rangle \leq \langle f(x),f(x'') \rangle$. Using the form in \eqref{eq:f_on_S}, we conclude that $\langle g_{\theta_0}(z),  g_{\theta_0}(z') \rangle \leq  \langle g_{\theta_0}(z),  g_{\theta_0}(z'') \rangle$. We have therefore shown that $g: \bbS^{p-2} \rightarrow \bbS^{p-2}$ is weakly isotonic.

Next, we argue that $\theta$ must in fact be $\phi$. Take any $x \in \cS_{\phi}$. Because $\cB$ is contained in a hemisphere, there exists an $x' \in \cS_{\phi}$ such that $\langle x, x' \rangle = \langle x, e_1 \rangle$. If $x = (\cos(\phi),\sin(\phi)z)$ and $x' = (\cos(\phi),\sin(\phi)z')$, it follows that  $\langle f(x), f(x') \rangle = \langle f(x), e_1 \rangle$ and
$$\cos(\theta) = \cos^2(\theta) + \sin^2(\theta) \langle z, z' \rangle = \cos^2(\theta) (1-\langle z, z' \rangle) + \langle z, z' \rangle.$$ 
This is a quadratic equation in $\cos(\theta)$ with solutions  $\cos(\theta) = 1$ and $\cos(\theta) = \langle z , z' \rangle / (1- \langle z , z' \rangle)$, and by injectivity, the latter must hold. The fact that $\langle x, x' \rangle = \langle x, e_1 \rangle$ similarly implies that
$$\cos(\phi) = \cos^2(\phi) + \sin^2(\phi) \langle z, z' \rangle= \cos^2(\phi) (1-\langle z, z' \rangle) + \langle z, z' \rangle,$$
and the same arguments end in $\cos(\phi) = \langle z , z' \rangle / (1- \langle z , z' \rangle)$. We thus have that $\theta = \phi$, establishing that, up to an orthogonal transformation, $f$ acts as the identity on $\cS_{\phi}$. 

To complete the proof, let $\cV_0 = \cS_{\phi} \cup \{e_1\}$ and define $\cV_t$ recursively by letting $\cV_t$ be made of the midpoints of any pair of points in $\cV_{t-1}$ that are not diametrically opposed. Letting $\cV = \bigcup_{t \geq 0} \cV_t$, we  apply \lemref{midpoints} recursively to obtain $f(x) = x $ for any $x \in \cV$. Since $\cV$ is dense in $\cB$ and $f$ is continuous, we conclude $f(x) = x$ for all $x \in \cB$.
\end{proof}

\subsection{Discrete asymptotic setting }
\label{sec:mds_discrete_asymptotic}
We return to the discrete setting of \secref{mds_discrete}. The following result establishes that in the realizable setting with points dense in an open subset of $\bbS^{p-1}$, the ordinal constraints contain enough information to uniquely determine the points, up to an orthogonal transformation. 

% \begin{theorem}
% \label{thm:discrete_mds}
% Let $(x_n)$ be a sequence of distinct points in $\bbS^{p-1}$ and $(x'_{ni})$ be a triangular array with distinct points in each row in $\bbS^{p-1}$. Assume that, for each $n$, $\{ x_1, \dots,x_n\}$  and $\{ x'_{n1}, \dots,x'_{nn}\}$ are indistinguishable configurations in the sense of \eqref{eq:mds_equivalence}. \lnew{In addition, if $p=2$, assume that $(x_n)$ is dense in an open subset of $\bbS^{p-1}$ that contains a closed hemisphere.} If $p \geq 3$, assume that $(x_n)$ is dense in an open subset of $\bbS^{p-1}$. For each $n$, let $f_n : \{x_1, \dots, x_n\} \to \{ x'_{n1}, \dots,x'_{nn}\}$ such that $f_n(x_i) = x'_{ni}$ for all $i \in  [n]$. 
% Then $(f_n)$ is sequentially compact for the pointwise convergence topology and each \lnew{non-constant} function where it accumulates coincides with an orthogonal transformation on $\cX_{\infty} := \{x_i : i \geq 1 \}$.
% \end{theorem}

\begin{theorem}
\label{thm:discrete_mds}
Let $(x_n)$ be a sequence of distinct points in $\bbS^{p-1}$ such that $(x_n)$ is dense in an open subset of $\bbS^{p-1}$. If $p=2$, additionally assume that the open subset contains a closed hemisphere. Let $(f_n)$ be a sequence of weakly isotonic functions $f_n: \{x_1, \dots, x_n\} \to \bbS^{p-1}$ satisfying \eqref{eq:weakly_isotonic_sphere}. Then $(f_n)$ is sequentially compact for the pointwise convergence topology and each non-constant function where it accumulates coincides with an orthogonal transformation on $\cX_{\infty} := \{x_i : i \geq 1 \}$.
\end{theorem}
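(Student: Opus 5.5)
Sequential compactness comes from compactness of $\bbS^{p-1}$ and a diagonal argument. Since $\cX_\infty$ is countable and each $f_n(x_i)$, defined for $n \ge i$, lies in the compact set $\bbS^{p-1}$, we extract successive subsequences along which $f_n(x_1)$ converges, then $f_n(x_2)$, and so on. The diagonal subsequence then converges pointwise on $\cX_\infty$ to some $f:\cX_\infty\to\bbS^{p-1}$.

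Next we show that any such limit $f$ is weakly isotonic on $\cX_\infty$. Take $x,x',x''\in\cX_\infty$ with $\langle x,x'\rangle<\langle x,x''\rangle$. For all $n$ large enough these three points lie in $\{x_1,\dots,x_n\}$, so $\langle f_n(x),f_n(x')\rangle\le\langle f_n(x),f_n(x'')\rangle$. Passing to the limit along the subsequence gives the same weak inequality for $f$.

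The remaining task is to extend $f$ from the countable dense set to the open set $\cU$ in which $(x_n)$ is dense, so that \thmref{mds_open} applies. This is the main obstacle. The natural tool is the uniform-continuity statement \lemref{uniform_continuity} referenced in the proof of \lemref{continuity}. I expect it to say that a weakly isotonic function on a set dense in an open ball has a modulus of continuity on a smaller ball, depending only on the geometry. Using it, $f$ is uniformly continuous on $\cX_\infty\cap B$ for each small ball $B\subset\cU$, so it extends continuously to $\cl(\cX_\infty)\supset\cU$.

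If that lemma is not available in this form, I would prove the continuity directly via a ``sandwich'' argument. If $x_k\to x$ in $\cX_\infty$, choose points $y,y'\in\cX_\infty$ with $\langle x,y\rangle$ and $\langle x,y'\rangle$ nearly equal and close to $\langle x,x_k\rangle$. Weak isotonicity then forces $\langle f(x),f(x_k)\rangle$ close to $1$. Non-constancy of $f$ is needed here, which is why that assumption appears.

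The extension $\bar f$ must then be checked to be weakly isotonic on $\sint(\cl(\cX_\infty))$. Given $\langle x,x'\rangle<\langle x,x''\rangle$ with the points in that open set, approximate each by points of $\cX_\infty$. The strict inequality persists for the approximants, so the weak inequality holds for $f$ at them and passes to the limit by continuity. The extension $\bar f$ is still non-constant, since it agrees with $f$ on $\cX_\infty$.

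Now \thmref{mds_open} applies to $\bar f$ on the open set $\cU$, and for $p=2$ the interior of $\cU$ contains a closed hemisphere by assumption. Hence $\bar f$ coincides with an orthogonal transformation on $\cU$, and in particular $f$ does on $\cX_\infty\cap\cU$. Points of $\cX_\infty$ outside $\cU$ are handled as in \lemref{mds_extension}. For such a point $x$, the points $x$ and $f(x)$ are weakly equivalent with respect to a ball (a hemisphere if $p=2$) in the orthogonally transformed image, by the limit inequality above. This uses density of $\cX_\infty$ in $\cU$ together with the continuity of the limit argument used in the proof of \thmref{external_asymptotic}. Then \thmref{external_open} or \thmref{external_halfsphere} shows that $f(x)$ is the image of $x$ under the same orthogonal transformation.
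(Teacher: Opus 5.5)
\emph{Verdict: genuine gap (shared with the paper's proof).}

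Your outline follows the paper's proof step for step: diagonal extraction, weak isotonicity of the pointwise limit, continuous extension via \lemref{uniform_continuity}, \thmref{mds_open} on the open part, and then the external-unfolding argument of \lemref{mds_extension} for the remaining points of $\cX_\infty$. The step that fails is ``$\bar f$ is still non-constant, since it agrees with $f$ on $\cX_\infty$.''

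The proof of \thmref{mds_open} works on a ball $\cB\subset\cU$ and needs $\bar f$ injective there. That injectivity comes from \lemref{injective}, whose proof uses convexity of the domain, and from \lemref{midpoints}, which assumes non-constancy on the ball. So you need $\bar f$ non-constant on $\cB$, i.e.\ $f$ non-constant on $\cX_\infty\cap\cB$. For $p\ge 3$, non-constancy on $\cX_\infty$ does not imply this. Here is a counterexample:
\begin{itemize}
\item Let $\cX_\infty$ consist of $-e_1$ together with a countable dense subset of $\sball(e_1,0.1)$.
\item Let each $f_n$ be the restriction to $\cX_n$ of the map $f$ equal to $e_1$ on the cap and $e_2$ at $-e_1$.
\item The only triples that could constrain $f(-e_1)$ nontrivially are $(x,x',-e_1)$ with $x,x'$ in the cap. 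These never satisfy $\langle x,x'\rangle<\langle x,-e_1\rangle$, because $\langle x,x'\rangle>0.98>\langle x,-e_1\rangle$.
\item Checking the remaining cases shows $f$ is weakly isotonic and non-constant, yet not injective, hence not orthogonal.
\end{itemize}
Replacing $-e_1$ by a dense subset of a small cap around it gives the same failure even when $\cX_\infty$ lies inside a (now disconnected) open set in which it is dense.

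The paper's proof makes exactly the same leap (``by \lemref{injective} and the assumption that $f$ is non-constant, $f$ is injective on $\cB$''). So for $p\ge 3$ the statement has to be read with the accumulation point non-constant on $\cX_\infty\cap\cB$, for some ball $\cB$ in which $\cX_\infty$ is dense.

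For $p=2$ the transfer does hold, but it needs an argument you do not give. Suppose $f\equiv c$ on $\cX_\infty\cap\cB$ and $q\notin\cB$. Pick $x,x'\in\cX_\infty\cap\cB$ close to an antipodal pair in $\cB$, so that $\langle x,x'\rangle<\langle x,q\rangle$. Weak isotonicity then gives $1\le\langle c,f(q)\rangle$, which forces $f(q)=c$.

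With that hypothesis your argument goes through, provided you apply \thmref{mds_open} on such a ball rather than on all of $\cU$, which may be disconnected.

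Two smaller points:
\begin{itemize}
\item \lemref{uniform_continuity} gives a continuous extension to $\sint(\cl(\cX_\infty))$, not to $\cl(\cX_\infty)$. The former is all you actually use.
\item Your fallback `sandwich' argument is unnecessary. Non-constancy plays no role in continuity, since \lemref{uniform_continuity} holds for every weakly isotonic $f$.
\end{itemize}
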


\begin{proof}
The sequential compactness of $(f_n)$ for the pointwise convergence topology is a standard result that rests on Cantor's diagonal argument. 
%(see Lemma 2 of \cite{arias2017some}). 

We may therefore assume without loss of generality that $(f_n)$ converges pointwise, with $f$ denoting the limit, so that $f(x) = \lim_{n \to\infty} f_n(x)$ for all $x \in \cX_{\infty}$. Take $x,x',x'' \in \cX_{\infty}$ such that $\langle x,x' \rangle < \langle x, x'' \rangle$. Then $x,x',x'' \in \cX_n := \{x_1, \dots, x_n\}$ for all $n$ large enough, and when this is the case, $\langle f_n(x) , f_n(x') \rangle \leq \langle f_n(x) , f_n(x'') \rangle $ by the fact that $f_n$ is weakly isotonic on $\cX_n$. Taking the limit as $n\to\infty$, we obtain $\langle f(x) , f(x') \rangle \leq \langle f(x) , f(x'') \rangle $. We conclude that $f$ is weakly isotonic on $\cX_{\infty}$. 

If $p=2$, let $\cB  \subset \S$ be an open ball that contains a closed hemisphere where $\cX_{\infty}$ is dense, and if $p \geq 3$, let $\cB \subset \S$ be an open ball where $\cX_{\infty}$ is dense.
Then, by \lemref{uniform_continuity} below, $f$ is uniformly continuous on $\cX_{\infty} \cap \cB$, and therefore admits a unique continuous extension on $\cB$, which we still denote by $f$. This extension is clearly weakly isotonic on $\cX := \cB \cup \cX_{\infty}$. Moreover, by \lemref{injective} and the assumption that $f$ is non-constant, $f$ is injective on $\cB$. Therefore, by \thmref{mds_open}, $f$ coincides with an orthogonal transformation on $\cB$, and by \lemref{mds_extension}, $f$ coincides with an orthogonal transformation on $\cX$, and therefore also on $\cX_{\infty}$.
\end{proof}

\begin{lemma}
\label{lem:uniform_continuity}
Suppose that $\cX$ is dense in an open ball  $\cB \subset \bbS^{p-1}$. Then any weakly isotonic function $f: \cX \to \bbS^{p-1}$ in the sense of \eqref{eq:weakly_isotonic_sphere} must be uniformly continuous on $\cX \cap \cB$.
\end{lemma}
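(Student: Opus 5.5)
The plan is to argue by contradiction and build infinitely many points of $\cX \cap \cB$ whose images are pairwise separated by a fixed amount, which is impossible on the compact sphere $\S$. If $f$ is not uniformly continuous on $\cX \cap \cB$, there are $\eps > 0$ and pairs $x_n, x'_n \in \cX \cap \cB$ with
$$d_n := \|x_n - x'_n\| \to 0, \qquad \|f(x_n) - f(x'_n)\| \ge \eps .$$
By compactness of $\S$ I pass to a subsequence with $x_n, x'_n \to x^\ast \in \cl(\cB)$.

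The key observation uses only \eqref{eq:weakly_isotonic_sphere}, and not \lemref{equality} or \lemref{continuity}, since those rely on the present lemma. Fix a bad pair $(x_n, x'_n)$ and take any $z \in \cX$ with $\|x_n - z\| > d_n$. On the sphere this reads $\langle x_n, z\rangle < \langle x_n, x'_n\rangle$. Weak isotonicity then gives
$$\langle f(x_n), f(z)\rangle \le \langle f(x_n), f(x'_n)\rangle \le 1 - \eps^2/2,$$
that is, $\|f(x_n) - f(z)\| \ge \eps$. So every point of $\cX$ farther from $x_n$ than $x'_n$ is, has its image outside the $\eps$-ball around $f(x_n)$.

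Next I extract a subsequence $n_1 < n_2 < \cdots$ along which $\|x_{n_k} - x_{n_j}\| > d_{n_k}$ for all $j > k$. Applying the observation with $z = x_{n_j}$ then gives $\|f(x_{n_k}) - f(x_{n_j})\| \ge \eps$ for all $j \ne k$. This is an infinite $\eps$-separated set in $\S$, which is the desired contradiction. The subsequence is easy to choose as long as $\|x_{n_k} - x^\ast\| > 2 d_{n_k}$ for each $k$. Given $n_1, \dots, n_k$, I pick $n_{k+1}$ large enough that $x_{n_j}$ lies within $\tfrac12 \min_{i \le k}\big(\|x_{n_i} - x^\ast\| - 2 d_{n_i}\big)$ of $x^\ast$ for all $j \ge n_{k+1}$. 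The triangle inequality then yields $\|x_{n_i} - x_{n_j}\| > d_{n_i}$ for every $i \le k$.

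The main obstacle is the degenerate situation in which the bad pairs shrink onto $x^\ast$ too fast, for instance $x_n = x^\ast$ infinitely often, so that $\|x_n - x^\ast\| \le 2 d_n$. This is where density of $\cX$ in the open ball $\cB$ enters. I would first replace $\eps$ by $\eps/2$ and modify each bad pair. Since $\cB$ is open and $\cX$ is dense in it, I can pick points $w \in \cX \cap \cB$ at distance between $3 d_n$ and, say, $\sqrt{d_n}$ from $x^\ast$, in a direction chosen so that $w$ stays inside $\cB$ even when $x^\ast \in \partial \cB$. Using $w$ and a further nearby point $w' \in \cX \cap \cB$, I would apply the same weak-isotonicity comparison to show that one of the pairs $(w, x_n)$, $(w, x'_n)$, or a pair nearby, is still $\eps/2$-bad. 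Roughly, $f(w)$ cannot be within $\eps/2$ of both $f(x_n)$ and $f(x'_n)$. Pinning down this replacement step rigorously, so that the new bad pairs have centers at distance much larger than their length from $x^\ast$, is the part I expect to need the most care. Once it is done, the separated-subsequence argument above applies verbatim.
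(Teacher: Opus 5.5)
Your key observation and the separated-subsequence argument are correct, and they give a different route from the paper. The paper uses density of $\cX$ in $\cB$ to build an explicit chain of $m \gtrsim \theta^{-1/2}$ points with increasing gaps, then applies a packing bound to get a H\"older-type modulus. But the proof as written is incomplete in the degenerate case. That is the case where both $\|x_n - x^\ast\| \le 2d_n$ and $\|x'_n - x^\ast\| \le 2d_n$ eventually, for instance $x_n = x^\ast$, and it is exactly the case you leave open. The repair you sketch does not handle it. Suppose $x_n = x^\ast$ and the $\eps/2$-bad pair produced by your point $w$ is $(w, x_n)$. Then its length is exactly $\|w - x^\ast\|$, so neither endpoint lies farther from $x^\ast$ than the pair's length. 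The new pair is as degenerate as the old one, and nothing in the sketch controls which of $(w,x_n)$, $(w,x'_n)$ is the bad one.

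The missing idea is to anchor each comparison at the \emph{later} point of the subsequence, not the earlier one. Since $x_n \neq x'_n$, at most one of them equals $x^\ast$, so after swapping within pairs you may assume $x_n \neq x^\ast$ for all $n$. Now choose $n_j$ inductively so that
\[
\|x_{n_j} - x^\ast\| + d_{n_j} < \min_{k<j} \|x_{n_k} - x^\ast\| .
\]
This is possible because the left side tends to $0$ while the right side is a fixed positive number. Then for $k<j$ we get $\|x_{n_j} - x_{n_k}\| \ge \|x_{n_k} - x^\ast\| - \|x_{n_j} - x^\ast\| > d_{n_j}$. Your observation, applied with base point $x_{n_j}$ and $z = x_{n_k}$, then gives $\|f(x_{n_j}) - f(x_{n_k})\| \ge \eps$, so you obtain an infinite $\eps$-separated set in $\bbS^{p-1}$ with no case split.

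With this change the argument is complete and never uses density or the ball $\cB$. Compactness of the target sphere does all the work, so every weakly isotonic $f$ on an arbitrary $\cX \subset \bbS^{p-1}$ is uniformly continuous. Compared with the paper's chain-and-packing proof, you lose the explicit bound $\|f(x)-f(x')\| \le C_2\|x-x'\|^{1/(8p-8)}$. Only uniform continuity is used later in the paper.
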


\begin{proof} 
Write $\cB = \sball(x_0, r)$ for some $x_0 \in \bbS^{p-1}$ and $r>0$. It is enough to consider the case where $r < \sqrt{2}$, so that $\cB$ does not contain diametrically opposed points. Let $\phi = \cos^{-1}(1-r^2/2)$.  Take $x \neq x'$ in $\cX \cap \cB$. Define $\theta = \cos^{-1}\langle x,x' \rangle $. It will be sufficient to consider the case where $0< \theta < \min(\pi/3, \phi^2/2\pi)$.

First, we construct a finite sequence $z_i$ in $\sball(x_0, r)$ such that there is an increasing distance between consecutive points. Let $z_0 = x$ and set $\eps = (\pi\theta  - 3 \theta^2)/2\phi $. For $j \geq 1$, define $\theta_j =  \theta + j\eps$ and $z_k = \tilde{\gamma}_{(x, x_0)}(\sum^k_{j=1}  \theta_j )$, where $\tilde\gamma_{(x, x_0)}$ is defined in \eqref{eq:great_circle} as the great circle passing through $x$ and $x_0$. Let $m \geq 1$ be the maximum such that $\sum_{j=1}^m  \theta_j < \phi $. Since $m$ satisfies $2m\theta + 3m^2 \eps \geq  \phi$, we have 
\begin{align}
\label{eq:m_lower_bound}
m \geq \min \left( \phi/(4\theta), \sqrt{\phi/(6 \eps)} \right) \geq  \frac{\phi}{4} \sqrt{\frac{3}{\pi \theta}} .
\end{align}
Also, since $m\theta \leq \phi$, we also have $\theta \leq \theta_j \leq\theta_m  = \theta + m\eps \leq (\pi - \theta)/2 $. Therefore, by a Taylor expansion, for $j = 1, \dots, m$,
\begin{align}
\label{eq:neighbor_z}
\langle z_j, z_{j-1} \rangle 
&= \cos(\theta_{j-1} + \eps) <  \cos(\theta_{j-1}) - \eps\sin(\theta) = \langle z_{j-1}, z_{j-2} \rangle  - \eps\sin(\theta). 
\end{align}
 
Let $x_0 = x$, as well as $\delta = \eps\sin(\theta)/6$. Since $\cX$ is dense in $\cB$, there exist $x_1, \dots, x_m \in \cX$ such that $\max_i \|z_i - x_i\| \leq \delta$.  Therefore, by the Cauchy-Schwarz inequality and \eqref{eq:neighbor_z}, for $j=1,\dots, m$,  \begin{align*}
\langle x_j, x_{j-1} \rangle \leq \langle z_j, z_{j-1} \rangle + 3 \delta < \langle z_{j-1}, z_{j-2} \rangle - 3\delta \leq \langle x_{j-1}, x_{j-2} \rangle.
\end{align*}
This implies that 
$$
\langle x, x'\rangle > \langle x,x_1 \rangle >   \langle x_1,x_2 \rangle >\cdots   > \langle x_{m-1},x_m \rangle,
$$
which in turn implies, by weak isotonicity of $f$ on $\cX$, that
\begin{align}
\label{eq:isotonicity_chain}
    \langle f(x), f(x') \rangle \geq \langle f(x_0), f(x_1) \rangle  \geq \langle f(x_1), f(x_2) \rangle \geq \cdots \geq \langle f(x_{m-1}), f(x_m) \rangle.
\end{align}

We also have, for any $i,j \in [m]$ such that $1 \leq i \leq j-2$ ,
\begin{align*}
\langle z_i, z_j\rangle = \cos\bigl(\textstyle\sum^j_{k=i+1}  \theta_k  \bigr)  \leq \cos(\theta_{j-1} + \theta_j) < \cos \theta_j - \eps \sin\theta = \langle z_j, z_{j-1}\rangle - 6\delta, 
\end{align*}
so that \begin{align*}
    \langle x_i, x_j\rangle &\leq \langle z_i, z_j\rangle + 3\delta <  \langle z_j, z_{j-1}\rangle - 3\delta \leq  \langle x_{j}, x_{j-1}\rangle,
\end{align*}
and therefore, by weak isotonicity of $f$ on $\cX$, 
$$\langle f(x_i), f(x_j)\rangle \leq \langle f(x_{j}), f(x_{j-1}) \rangle.$$ 

Combining this with \eqref{eq:isotonicity_chain}, we have that 
$$\langle f(x_i), f(x_j)\rangle \leq \langle f(x), f(x')\rangle, \quad \text{for all } i,j \in [m].$$ 
Thus, letting $\eta = \cos^{-1}\langle f(x), f(x')\rangle $, $\{f(x_1), \dots, f(x_m)\}$ forms a $\eta-$packing of $\bbS^{p-1}$. It is well-known that such a packing must have cardinality $\leq C_0 \eta^{-(p-1)}$ for some constant $C_0$ depending only on $p$, and hence,
\begin{align}
\label{eq:m_upper_bound}
    m \leq C_0  [\cos^{-1}\langle f(x), f(x')\rangle ]^{-(p-1)}.
\end{align}

Combining \eqref{eq:m_lower_bound} and \eqref{eq:m_upper_bound}, we obtain 
$$
 \cos^{-1}\langle f(x), f(x')\rangle \leq C_1 [\cos^{-1} \langle x, x' \rangle]^{1/(2p-2)},
$$
or, equivalently, 
$$
 \cos^{-1}\left( 1 - \frac{\|f(x) - f(x')\|^2}{2} \right)\leq C_1 \left[\cos^{-1} \left( 1 - \frac{\|x - x'\|^2}{2} \right)\right]^{1/(2p-2)},
$$
for some other constant $C_1$ that depends only on $p$ and $\phi$ (and $\phi$ is a function of $r$).

Finally, using the bounds $t^2/4 \leq \cos^{-1}(1-t^2/2) \leq 4\sqrt{t}$ when $0 \leq t \leq 2$, we have 
$$\|f(x) - f(x')\| \leq C_2 \|x - x'\|^{1/(8p-8)},$$
for some other constant $C_2$ that depends only on $p$ and $\phi$ (or $r$). We just established this for all $x,x' \in \cX \cap \cB$, and it obviously implies that $f$ is uniformly continuous on $\cX \cap \cB$.
\end{proof}

\section{Spherical internal unfolding}
\label{sec:internal}

In internal unfolding, also referred to as ``unfolding" in the psychometrics literature (``bipartite localization" elsewhere \cite{einav2023}), the dataset contains two classes of items and only measurements between pairs of items, one in each class. Typically, in psychometrics, the measurements are preferences of different individuals for a set of different objects. In the Euclidean setting, the task is to embed both the individuals and objects in Euclidean space based on such measurements. Note that unlike in the external unfolding setting, the objects are not already embedded, instead, their location also needs to be determined. 
In spherical internal unfolding, the task is to embed both the individuals and objects on the unit sphere. (In another variant, the objects may be confined to the unit sphere while the individuals are unconstrained \cite{borg1980, smacof2}.) We focus on conditional unfolding in the ordinal setting, in which preferences of an individual are compared only to preferences by the same individual, i.e., information of the following sort is available: ``user $i$ prefers object $j$ over object $k$''. This corresponds to the method of triads \cite{torgerson1952multidimensional, miller1956}. 

\subsection{Discrete setting}
In practical settings, the problem of embedding individuals and objects into the sphere is as follows: Given a set of row ranks $(r_{ij} : i \in [m], j \in [n])$, with each $(r_{i1}, \dots, r_{in})$ a permutation of $(1,\dots, n)$, and a dimension $p \geq 2$, \begin{align}
\label{eq:internal_unfolding_discrete}
\text{Find } x_1, \dots, x_m \in \bbS^{p-1} \text{ and } y_1, \dots, y_n \in \bbS^{p-1} \\ \text{ such that } \langle x_i, y_j \rangle < \langle x_i, y_k \rangle \text{ whenever } r_{ij} < r_{ik}.
\end{align}

In the realizable setting, there are points $\hat{x}_1 \dots, \hat{x}_m, \hat{y}_1, \dots, \hat{y}_n \in \bbS^{p-1}$ such that  $r_{ij} < r_{ik}$ whenever $\langle \hat{x}_i, \hat{y}_j \rangle < \langle \hat{x}_i, \hat{y}_k \rangle$. Given a solution to \eqref{eq:internal_unfolding_discrete}, any orthogonal transformation of the solution, applied to both the individuals and objects, is also a solution. The question of uniqueness is otherwise analogous: {\em Is there enough information in the data to recover the original points and objects up to an orthogonal transformation in the large-sample ($m \to \infty$ and $n\to\infty$) limit?}

In the Euclidean setting, under certain technical conditions on the set of objects and set of individuals, ordinal comparisons of each individual’s preferences for the objects is enough to recover both the  the individuals and the objects, up to a similarity transformation in the large $m$ and $n$ limit \citep{arias2025ordinal}. As in the previous settings, the assumptions for these results also require that individuals are dense in some open set of $\bbR^p$, so that the results do not immediately extend to the spherical setting.  

\begin{remark}
In another variant of spherical internal unfolding, one may wish to embed only the individuals on the sphere while embedding the objects in the ambient Euclidean space, or vice versa. The \texttt{unfolding} function in the \texttt{SMACOF} package \cite{smacof2} allows users to do that. We leave the question of uniqueness in this setting as an open question.
\end{remark}

\subsection{Continuum setting}
Again we consider the continuum limit, where the number of individuals and objects form uncountably infinite sets, $\cX$ and $\cY$.  In the discrete setting, we say that two configurations of individuals and objects, $\{x_1, \dots, x_m ; y_1, \dots, y_n \}$ and  $\{x'_1, \dots, x'_m ; y'_1, \dots, y'_n \}$, are indistinguishable if \begin{equation}
\label{eq:equiv_internal}
    \langle x_i, y_k \rangle < \langle x_i, y_l \rangle \iff   \langle x'_i, y'_k \rangle < \langle x'_i, y'_l \rangle \qquad \text{for all } i,k,l \in [m] \times [n] \times [n].
\end{equation}
 
When passing to the continuum, we extend this notion by considering pairs of injective functions, $f: \cX \to \bbS^{p-1}$ and $g: \mathcal{Y} \to \bbR^p$, that preserve preferences in the sense that,
\begin{equation}
\label{eq:internal1}
    \langle x, y \rangle < \langle x, y' \rangle \iff   \langle f(x), g(y) \rangle <\langle f(x), g(y') \rangle \qquad \text{for all } x,y,y' \in \cX \times \cY \times \cY.
\end{equation}

Clearly, if $f=g$ is an orthogonal transformation, then \eqref{eq:internal1} holds. We wish to determine conditions on $\cX$ and $\cY$ under which every pair of functions $(f,g)$  satisfying \eqref{eq:internal1} must be exactly like that. Following \cite{arias2025ordinal}, it turns out to be sufficient to consider a weaker variant where we consider pairs of functions,  $f: \cX \to \bbS^{p-1}$ and $g: \mathcal{Y} \to \bbS^{p-1}$, that preserve preferences in the sense that \begin{equation}
\label{eq:internal2}
    \langle x, y \rangle < \langle x, y' \rangle \implies   \langle f(x), g(y) \rangle \leq \langle f(x), g(y') \rangle \qquad \text{for all } x,y,y' \in \cX \times \cY \times \cY.
\end{equation}

\begin{remark}
While we are able to establish the spherical analogue of Proposition~4.3 in \cite{arias2025ordinal}, a result that makes assumptions on $f(\cX)$ and $g(\cY)$, we are not able to do the same for Theorem~4.5 there, which is in that paper the main result as it does away with such `unnatural' assumptions. For this reason, we make simplifying assumptions below that carry the benefit of being more transparent, and understood as applying to settings where the object set is embedded as the entire sphere.
\end{remark}

\begin{theorem}
\label{thm:internal_continuum_main}
Suppose $\cX \subset \bbS^{p-1}$ is open and that $\cY = \bbS^{p-1}$. 
If $p=2$, additionally assume that the interior of $\cX$ contains a closed hemisphere. 
Consider a pair of functions $(f,g)$ satisfying \eqref{eq:internal2} such that $g(\bbS^{p-1}) = \bbS^{p-1}$.
Then $f$ and $g$ coincide with the same orthogonal transformation on $\cX$ and $\cY$, respectively.
\end{theorem}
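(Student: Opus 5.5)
The plan is to first pin down $g$ on $\cX$ and on $-\cX$ in terms of $f$, then reduce to \thmref{mds_open}, and finally propagate to all of $\cY=\bbS^{p-1}$ with a hyperplane argument in the spirit of \thmref{external_open}.

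\emph{Step 1 ($g=f$ on $\cX$, $g=-f$ on $-\cX$).} Fix $x\in\cX$. Since $x\in\cY$ and $\langle x,y\rangle<\langle x,x\rangle=1$ for every $y\neq x$, \eqref{eq:internal2} gives $\langle f(x),g(y)\rangle\le\langle f(x),g(x)\rangle$ for all $y\in\bbS^{p-1}$. Because $g$ is surjective, the left side runs over all values $\langle f(x),w\rangle$, $w\in\bbS^{p-1}$. Hence $\langle f(x),g(x)\rangle=1$, that is, $g(x)=f(x)$. The same argument applied to the minimizer $-x$ gives $g(-x)=-f(x)$.

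\emph{Step 2 ($f$ is weakly isotonic and non-constant).} Using $g=f$ on $\cX$, condition \eqref{eq:internal2} restricted to $y,y'\in\cX$ is exactly \eqref{eq:weakly_isotonic_sphere}, so $f$ is weakly isotonic on $\cX$. To exclude $f\equiv c$, fix $y,y'$ with $\hplane(y,y')\cap\cX\neq\varnothing$. Since $\cX$ is open, we can pick $x_1,x_2\in\cX$ on opposite sides of this hyperplane. Applying \eqref{eq:internal2} at $x_1$ and at $x_2$ gives $\langle c,g(y)\rangle=\langle c,g(y')\rangle$. Thus $y\mapsto\langle c,g(y)\rangle$ is invariant under reflections across every linear hyperplane meeting $\cX$. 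These reflections act transitively on $\bbS^{p-1}$: any two points $y,y'$ can be chained by such reflections, since the hyperplanes meeting a fixed open set form an open family. So $\langle c,g(\cdot)\rangle$ is constant, which contradicts $g(\bbS^{p-1})=\bbS^{p-1}$. Now \thmref{mds_open} applies (in the case $p=2$ we use the closed-hemisphere assumption). Hence $f$ coincides on $\cX$ with an orthogonal $Q$, which we normalize to the identity. Then $g(x)=x$ and $g(-x)=-x$ for $x\in\cX$.

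\emph{Step 3 (extend $g$ to the whole sphere).} Read \eqref{eq:internal2} with $y,y'$ fixed and $x$ varying. For every $x\in\cX$ in the open half-space $\{\langle x,y'-y\rangle>0\}$ we get $\langle x,g(y')-g(y)\rangle\ge0$. Suppose $\hplane(y,y')$ meets $\cX$. Then, by openness, the half-spaces $\{\langle\cdot,y'-y\rangle>0\}$ and $\{\langle\cdot,g(y')-g(y)\rangle\ge0\}$ must agree near $\cX$. This forces $g(y')-g(y)=\lambda(y'-y)$ with $\lambda\ge0$. Take $y'=z$ with $g(z)=z$ already known. Then $g(y)$ lies on the line through $z$ in direction $y-z$, which meets the sphere only at $z$ and $y$. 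Using two distinct such anchors $z$ (which also rules out $g(y)=z$) yields $g(y)=y$. I would bootstrap from this: start with the known set $\cK_0=\cX\cup(-\cX)$, and let $\cK_{t+1}$ be the set of $y$ admitting two anchors $z\in\cK_t$ with $\hplane(y,z)\cap\cX\neq\varnothing$. The goal is to show $\bigcup_t\cK_t=\bbS^{p-1}$; density plus a continuity argument would also suffice. For continuity I would try monotonicity in the spirit of \lemref{uniform_continuity}, or else directly show the union is everything.

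\emph{Main obstacle.} Step 3 is the main obstacle, specifically verifying that the bisector condition $\hplane(y,z)\cap\cX\neq\varnothing$ can be met for every $y$ after finitely many bootstrap rounds. Geometrically, $\hplane(y,z)$ passes through the spherical midpoint of $y$ and $z$ and is orthogonal to $y-z$. When $p\ge3$, one can tilt $z$ inside the current known region so that this hyperplane sweeps through $\cX$. The fact that the known region contains both $\cX$ and $-\cX$ should supply enough anchors that each round enlarges the known region by a fixed angular margin. For $p=2$, the closed-hemisphere assumption combined with $-\cX$ already gives $g=\mathrm{id}$ on all of $\bbS^1$. The first approach I would try is a direct covering argument: show that $\cK_1$ contains a uniform neighbourhood of $\cK_0$, and iterate.
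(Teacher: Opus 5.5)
Your Steps 1 and 2 are correct. Step 1 is the paper's argument. Your reflection argument ruling out a constant $f$ supplies the non-constancy hypothesis of \thmref{mds_open}, which the paper's proof uses without checking.

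The anchor lemma in Step 3 is also valid. Compare the implication for the pair $(y,z)$ and for $(z,y)$ at points of $\hplane(y,z)\cap\cX$, which is a nonempty, relatively open subset of the great sphere $(y-z)^\perp\cap\bbS^{p-1}$. This forces $g(z)-g(y)$ to be a nonnegative multiple of $z-y$, so $g(y)\in\{y,z\}$, and two distinct anchors give $g(y)=y$. This differs from the paper's device. The paper fixes $x$ at the spherical midpoint of $y$ and the anchor. It then squeezes $\langle x,g(y)\rangle$ between one-sided limits of $\langle x,y(t)\rangle$ as $t\to -m+1$, and pins down $g(y)$ by two linear equations on the sphere. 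Your version avoids the limiting argument.

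The gap is that you never show the bootstrap exhausts the sphere. For $p\ge 3$ you only conjecture that each round adds a fixed angular margin. That propagation is the substantive part of the extension step, so as written $g=\mathrm{id}$ off $\cX\cup(-\cX)$ is not established.

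The missing ingredient is an explicit choice of anchors on the great circle through $x_0$ and $y$, on the far side of $x_0$; the paper provides this.
\begin{itemize}
\item Let $\cB=\sball(x_0,r)\subset\cX$ have angular radius $\phi$, and suppose $g=\mathrm{id}$ on the open cap of angular radius $\beta$ about $x_0$.
\item Write $y=\cos\alpha\,x_0+\sin\alpha\,v$ with $v\perp x_0$ a unit vector and $\beta\le\alpha<\beta+\phi$.
\item For any $\beta'\in(\beta-\phi,\beta)\cap[0,\beta)$, set $z=\cos\beta'\,x_0-\sin\beta'\,v$.
\item The point $\mu=\cos\frac{\alpha-\beta'}{2}\,x_0+\sin\frac{\alpha-\beta'}{2}\,v$ satisfies $\langle\mu,y\rangle=\langle\mu,z\rangle=\cos\frac{\alpha+\beta'}{2}$, so $\mu\in\hplane(y,z)$.
\item $\mu$ lies at angle $\frac{\alpha-\beta'}{2}<\phi$ from $x_0$, hence $\mu\in\cB\subset\cX$.
\end{itemize}
Two values of $\beta'$ give two distinct anchors, so $g=\mathrm{id}$ on the cap of radius $\beta+\phi$. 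Starting from $\beta=\phi$, finitely many rounds cover $\bbS^{p-1}$.

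This is exactly the paper's use of $y(-m+1)$ and $y(-m+1/2)$. The construction works for every $p\ge 2$ and needs neither $-\cX$ nor any tilting of $z$. Your separate treatment of $p=2$ via $\cX\cup(-\cX)$ is correct but unnecessary once this step is in place.
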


\begin{proof}
We first show that $f = g$ on $\cX$.
Assume, for the sake of contradiction, that there is some $x \in \cX$ such that $f(x) \neq g(x)$. By \eqref{eq:internal2}, $$
\langle f(x), g(y) \rangle \leq  \langle f(x), g(x) \rangle < 1, \quad \text{for all } y \in \bbS^{p-1},
$$
which contradicts that fact that $g(\bbS^{p-1}) = \bbS^{p-1}$. 

Now that we know that $f=g$ on $\cX$, by \eqref{eq:internal2}, we have 
$$
\langle x,x' \rangle < \langle x,x'' \rangle \implies \langle f(x),f(x') \rangle < \langle f(x),f(x'') \rangle \quad \text{for all }x,x',x'' \in \cX.
$$ 
Therefore, $f$ is weakly isotonic on $\cX$, implying by \thmref{mds_open} that $f$ coincides with an orthogonal transformation on $\cX$, which we take to be the identity without loss of generality.
At this point we thus have
$$f(x) = g(x) = x, \quad \text{for all } x \in \cX.$$ 

To complete the proof we must show that $g(y) = y$ for all $y \in \bbS^{p-1}$. 
Choose any ball $\cB := \sball(x_0,r) \subset \cX$ and set $\phi := \cos^{-1}(1-r^2/2)$. 
Define  $\cB_m := \{y \in \bbS^{p-1} : \cos^{-1}(\langle x_0, y \rangle) < m \phi\}$. We proceed by induction. Suppose we have established that $g(y) = y$ for $y \in \cB_m \cap \cY$ for some integer $1 \leq m \leq \pi/\phi$. This is true for $m=1$ since $\cB_1 = \cB$ and we have shown above that $f = g$ coincide with the identity on $\cB$. Now, take $y \in \cB_{m+1} \backslash \cB_m$. Let $v$ be the unit vector in the direction $y - \langle y, x_0 \rangle x_0$ and let $y(t) := \cos(t\phi) x_0 + \sin(t\phi)v$. 
It follows that $y(t) \in \cB_m$ if and only if $|t| < m$.

In particular, we consider $y(-m+1)$ and $y(- m + 1/2)$ both in $\cB_m$. Noting that $y = y(m+\delta)$  for some $0 \leq \delta < 1$, we obtain that the midpoint of $y(-m+1)$ and $y$ is $y(1/2 + \delta/2) \in \cB_1$ and the midpoint of $y(- m + 1/2)$ and $y$ is $y(1/4 + \delta/2) \in \cB_1$. 

The orthogonality of $x_0$ (as a vector) and $v$, together with $\langle x_0, y \rangle^2 + \langle v, y \rangle^2 =1$, implies that $y$ is the unique point $z \in \mathbb{S}^{p-1}$ satisfying
\begin{align}
\langle x_0, z \rangle = \langle x_0, y \rangle 
\quad\text{and}\quad
\langle v, z \rangle = \langle v, y \rangle.
\end{align}
%\begin{align}
%\begin{bmatrix}
%x_0^\top \\
% v^\top
%\end{bmatrix}\, z = \begin{bmatrix}
%\langle x_0, y \rangle \\
%\langle v, y \rangle
%\end{bmatrix},
%\qquad
%\text{subject to } z \in \mathbb{S}^{p-1},
%\end{align}
And, since $\text{span}\{x_0, v \} = \text{span}\{y(1/2+\delta/2), y(1/4 + \delta/2)\}$, $y$ is also the unique point $z \in \mathbb{S}^{p-1}$ satisfying 
\begin{align}
\label{eq:internal_system2}
\langle y(1/2+\delta/2), z \rangle = \langle y(1/2+\delta/2), y\rangle
\quad\text{and}\quad
\langle y(1/4 + \delta/2), z \rangle = \langle y(1/4 + \delta/2), y\rangle.
\end{align}
Thus, it suffices to show that $g(y)$ satisfies the above equations. We focus on the first one, that is, on showing that 
\[\langle y(1/2+\delta/2), g(y) \rangle = \langle y(1/2+\delta/2), y\rangle,\] 
establishing the second one is analogous. 
When $t \in (-m+1/2, -m+1)$ we have $\langle y(1/2+\delta/2), y \rangle >\langle y(1/2+\delta/2), y(t)\rangle $. Therefore by \eqref{eq:internal2} and using the fact that $g(y(t)) = y(t)$, it follows that $ \langle y(1/2+\delta/2), g(y) \rangle  \geq \langle y(1/2+\delta/2), y(t)\rangle $. Taking the limit as $t \nearrow -m+ 1$ gives 
\[\langle y(1/2+\delta/2), g(y) \rangle \geq \langle y(1/2+\delta/2), y(-m+1)\rangle  = \langle y(1/2+\delta/2), y\rangle.\] 
Similarly, when $t \in (-m+1, -m+3/2)$ we have $\langle y(1/2+\delta/2), y \rangle < \langle y(1/2+\delta/2), y(t)\rangle $. It follows in the same way that $ \langle y(1/2+\delta/2), g(y)  \rangle \leq \langle y(1/2+\delta/2), y(t) \rangle $, and taking the limit as $t \searrow -m+1$ gives 
\[\langle y(1/2+\delta/2), g(y)  \rangle \leq  \langle y(1/2+\delta/2), y(-m+1)  \rangle = \langle y(1/2+\delta/2), y\rangle.\] 
We conclude that $\langle y(1/2+\delta/2), g(y) \rangle = \langle y(1/2+\delta/2), y\rangle $.
\end{proof}

\subsection{Discrete asymptotic setting}

Returning to the discrete setting, the following result establishes a uniqueness result in the large sample limit of the realizable setting, under appropriate conditions on the limit individual and object subsets. 

\begin{theorem}
Let $(x_n)$ and $(y_n)$ be a sequences of distinct points on $\bbS^{p-1}$ and let $f_n: \{x_1, \dots, x_n\} \rightarrow \bbS^{p-1}$ and $g_n: \{y_1, \dots, y_n\} \rightarrow \bbS^{p-1}$ satisfy \eqref{eq:internal2} on $\cX_n := \{x_1, \dots, x_n\}$ and $\cY_n := \{y_1, \dots, y_n\}$. 
Then $(f_n, g_n)$ is sequentially compact for the pointwise convergence topology. 
Assume further that $\cX_\infty := \{x_n : n \ge 1\}$ is dense in an open subset of $\bbS^{p-1}$ and that $\cY_\infty := \{y_n : n \ge 1\}$ is dense in $\bbS^{p-1}$.
If $p=2$, assume in addition that $\cX_\infty$ is dense in an open set containing a closed hemisphere.  
Then, if $(f,g)$ denotes an accumulation point for $(f_n,g_n)$ such that $g(\cY_\infty)$ is dense in $\bbS^{p-1}$, $f$ and $g$ must coincide with the same orthogonal transformation on $\cX_{\infty}$ and $\cY_{\infty}$, respectively.
\end{theorem}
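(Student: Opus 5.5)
Compactness comes first and is routine. Since $\bbS^{p-1}$ is compact, Cantor's diagonal argument, applied jointly to $(f_n,g_n)$ on the countable set $\cX_\infty \sqcup \cY_\infty$, yields a subsequence along which $f_n(x)$ and $g_n(y)$ converge for every $x\in\cX_\infty$ and $y\in\cY_\infty$. Let $(f,g)$ be the limit. Passing to the limit in \eqref{eq:internal2} exactly as in the proof of \thmref{discrete_mds} shows that $(f,g)$ satisfies \eqref{eq:internal2} on $\cX_\infty\times\cY_\infty\times\cY_\infty$: a strict inequality $\langle x,y\rangle<\langle x,y'\rangle$ among fixed indices holds for all large $n$, and non-strict inequalities are preserved under limits.

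Next I extend $g$ to all of $\bbS^{p-1}$ so that \thmref{internal_continuum_main} can be used. Two routes are possible.

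\emph{Route 1 (direct continuity).} Show $g$ is uniformly continuous on $\cY_\infty$ via a packing argument as in \lemref{uniform_continuity}. That lemma relied on the triplet structure among the $x$'s themselves, which is not available here, so I would avoid it.

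\emph{Route 2 (preferred).} First reduce to the case $f=g$ on the overlap, in the spirit of the proof of \thmref{internal_continuum_main}. Fix $x\in\cX_\infty$. For any $y\in\cY_\infty$ with $\langle x,y\rangle<\langle x,y'\rangle$ for all $y'$ in a neighbourhood of $x$, we get $\langle f(x),g(y)\rangle\le\langle f(x),g(y')\rangle$. Density of $g(\cY_\infty)$ together with the following claim then gives the identification of $f(x)$ with the limit of $g(y_k)$ along $y_k\to x$, $y_k\in\cY_\infty$:
\begin{quote}
\emph{Claim.} For $x\in\cX_\infty$, we have $g(y_k)\to f(x)$ whenever $y_k\to x$ with $y_k\in\cY_\infty$.
\end{quote}
To prove the claim, fix $\eps>0$. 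By density of $g(\cY_\infty)$ there is $y^*\in\cY_\infty$ with $\langle f(x),g(y^*)\rangle>1-\eps$. For $y$ close enough to $x$, which can be chosen so that $\langle x,y\rangle>\langle x,y^*\rangle$ (unless $y^*=x$, a trivial case), \eqref{eq:internal2} gives $\langle f(x),g(y)\rangle\ge\langle f(x),g(y^*)\rangle>1-\eps$. Hence $g$ extends continuously at points of $\cX_\infty$ with value $f(x)$, and by continuity also at points of the open set $\cO=\sint(\cl\cX_\infty)$.

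On $\cO$, define $\tilde g$ by this limit. Then $\tilde g$ is continuous there, and $\tilde f=\tilde g$ on $\cO$ (with $f$ extended by the same limit). Passing to limits in \eqref{eq:internal2} shows $\tilde f$ is weakly isotonic on $\cO$. If $\tilde f$ were constant on $\cO$, then $\langle \tilde f(x), g(y)\rangle\le 1$ would give no contradiction, so this case must be ruled out. I would do so by density of $g(\cY_\infty)$: a constant $\tilde f\equiv c$ together with the preference constraints forces $g(y)=c$ for every $y$ beating some point near $x$ in preference, contradicting density. Then \thmref{mds_open} makes $\tilde f$ orthogonal on $\cO$, and without loss of generality the identity, using the closed-hemisphere assumption when $p=2$.

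Finally, to get $g(y)=y$ for each $y\in\cY_\infty$, I rerun the extension argument from the end of the proof of \thmref{internal_continuum_main}. Working outward through the annuli $\cB_m$, $g(y)$ is pinned down by the two equalities \eqref{eq:internal_system2}. Each of these is obtained by squeezing between points $y(t)$ where $g$ is already known to be the identity. In the discrete setting I replace $y(t)$ by nearby points of $\cY_\infty$ (possible by density) and use continuity of $g$ on the already-identified region. The reference points $y(1/2+\delta/2)$ and $y(1/4+\delta/2)$ must be individuals, so they are approximated by points of $\cX_\infty$, and the two equalities hold in the limit.

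The main obstacle is the middle step: producing a continuous, non-constant identification $\tilde f=\tilde g$ on an open set using only density of $g(\cY_\infty)$. The one-sided inequalities must be handled with care, since $\cX_\infty$ and $\cY_\infty$ are different countable sets, and every ``take $y=x$'' step from the continuum proof has to become a limiting argument.
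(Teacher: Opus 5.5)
Your overall plan is the paper's. It has the same steps: diagonal compactness, passing \eqref{eq:internal2} to the limit, your Claim (exactly the paper's \eqref{eq:f_g}, proved the same way), weak isotonicity of $f$, the spherical MDS theorem, and a discretized rerun of the annulus argument of \thmref{internal_continuum_main}.

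The main gap is the sentence ``$g$ extends continuously at points of $\cX_\infty$ \dots\ and by continuity also at points of $\mathcal{O}$.'' Your Claim is pointwise and only concerns points of the countable set $\cX_\infty$, and at that stage you have no continuity of $f$. For a general function on $\cY_\infty$, having limits at every point of another countable dense set says nothing about the remaining points. So none of the following follows: that $\tilde g$ exists on $\mathcal{O}$, that it is continuous, or that $g(y)=\tilde g(y)$ for $y\in\cY_\infty\cap\mathcal{O}$. The last one is what you need to seed the annulus induction, and your later ``pass to limits'' step also relies on continuity of $\tilde g$.

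What is needed is the uniform statement \eqref{eq:f_g_alt}. One way is to make the Claim uniform:
take an $\eps$-net $w_1,\dots,w_K$ of $\bbS^{p-1}$, and for each $i$ two distinct $y^*_{i,1},y^*_{i,2}\in\cY_\infty$ with $\|g(y^*_{i,j})-w_i\|<\eps$. Set $\rho:=\min_i\frac12\|y^*_{i,1}-y^*_{i,2}\|>0$. Given $x$, pick $i$ with $\|f(x)-w_i\|<\eps$. Then $x$ is at distance at least $\rho$ from some $y^*_{i,j}$, so $\|x-y\|<\rho$ forces $\langle f(x),g(y)\rangle\ge\langle f(x),g(y^*_{i,j})\rangle>1-2\eps^2$.

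Alternatively, reorder as the paper does. The pointwise Claim already shows $f$ is weakly isotonic on $\cX_\infty$ (approximate $x',x''$ by points of $\cY_\infty$), and then \thmref{discrete_mds} makes $f$ orthogonal on $\cX_\infty$. This goes through \lemref{uniform_continuity}, so your reason for discarding Route~1 (no triplet structure among the $x$'s) disappears once weak isotonicity of $f$ is in hand. With $f$ continuous, a compactness argument then gives \eqref{eq:f_g_alt} and $g=\mathrm{id}$ on $\cY_\infty\cap\cl(\cX_\infty)$.

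Your non-constancy argument also does not close as written. If $\tilde f\equiv c$, the constraints give $\langle c,g(y)\rangle\ge\langle c,g(y')\rangle$ only when $\langle x,y\rangle>\langle x,y'\rangle$. Starting from $y'$ near $x$, or anywhere in $\mathcal{O}$, you only learn $g\approx c$ on a cap about $x$ of radius at most about the diameter of $\mathcal{O}$, which does not contradict density.

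You need to propagate by alternating centers. Take $x_0,x_1\in\cX_\infty\cap\mathcal{O}$ at geodesic distance $d>0$. If $\{y\in\cY_\infty:\langle c,g(y)\rangle>1-\eta\}$ contains $\cY_\infty$ intersected with a cap of radius $R$ about $x_0$, then it contains $\cY_\infty$ intersected with a cap of radius nearly $R+d$ about $x_1$. That in turn gives nearly $R+2d$ about $x_0$, and so on until all of $\cY_\infty$ is covered. Letting $\eta\to0$ gives $g\equiv c$, contradicting density of $g(\cY_\infty)$. The paper merely asserts non-constancy, but the justification you offer is insufficient.

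Finally, $\cX_\infty$ need not lie in $\mathcal{O}$, so say explicitly how $f$ is handled on $\cX_\infty\setminus\mathcal{O}$. You can use \lemref{mds_extension}, as inside \thmref{discrete_mds}, or, in your route, apply the Claim once $g=\mathrm{id}$ on $\cY_\infty$.
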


\begin{proof} 
The sequential compactness of $(f_n,g_n)$ results from Cantor's diagonal argument. 

Continuing, we may assume without loss of generality that $(f(x),g(y)) = \lim_{n \rightarrow \infty} (f_n(x), g_n(y))$ for all $x,y \in \cX_{\infty} \times \cY_{\infty}$.  Take $(x,y,y') \in \cX_{\infty} \times \cY_{\infty} \times \cY_{\infty}$  such that $\langle x, y \rangle <  \langle x, y' \rangle $. We then have $\langle f_n(x), g_n(y) \rangle \leq \langle  f_n(x), g_n(y') \rangle $, and taking the $n \to\infty$ limit yields $\langle f(x), g(y) \rangle \leq \langle  f(x), g(y') \rangle$. It is thus the case that
\begin{align}
\label{eq:internal_isotonic}
    \langle x,y \rangle < \langle x,y' \rangle \implies \langle f(x), g(y) \rangle \leq \langle  f(x), g(y') \rangle, \quad \text{for all }x,y,y' \in \cX_{\infty} \times \cY_{\infty} \times \cY_{\infty},
\end{align}
meaning that $(f,g)$ satisfies \eqref{eq:internal2} with respect to $\cX_\infty$ and $\cY_\infty$.
Note that this together with our assumptions that $\cY_\infty$ and $g(\cY_\infty)$ are both dense in $\bbS^{p-1}$ precludes $f$ from being constant.

Next, take $x \in \cX_{\infty}$ and $N_1 \subset \bbN$ such that $\lim_{n \in N_1} y_{n} = x$, the latter enabled by the fact that $\cY_\infty$ is dense in $\bbS^{p-1}$. We will show that $\ \lim_{n \in N_1}g(y_{n}) = f(x)$. Take $N_2 \subset \bbN$ such that $\lim_{n \in N_2}g(y_{n}) = f(x)$, enabled by the fact that $g(\cY_\infty)$ is dense in $\bbS^{p-1}$. Because the $y_{n}$, $n \in N_2$ are distinct, we may assume $\langle x, y_{n} \rangle <  1 $ for all $n \in N_2$ so that for any $n_2 \in N_2$, if $n_1 \in N_1$ is sufficiently large,  $\langle x, y_{n_2} \rangle < \langle x, y_{n_1} \rangle$. Therefore, by \eqref{eq:internal_isotonic}, $\langle f(x), g(y_{n_2}) \rangle \leq \langle f(x), g(y_{n_1}) \rangle$, and taking limits along $N_1$ and $N_2$ yields $\lim_{n_1 \in N_1} g(y_{n_1}) = f(x)$, since by construction $\lim_{n_2 \in N_2} g(y_{n_2}) = f(x)$.
We conclude that 
\begin{align}
\label{eq:f_g}
   \lim_{n \in N_1} y_{n} = x \implies  \lim_{n \in N_1} g(y_{n}) =f(x), \quad \text{for all } x \in \cX_{\infty}.
\end{align} We note that \eqref{eq:f_g}, together with a standard  compactness argument, implies the following:
\begin{equation}
\label{eq:f_g_alt}
\begin{gathered}
\text{For any $\eps > 0$, there is $\delta > 0$ such that,} \\ 
\text{if $x \in \cX_{\infty}$ and $y \in \cY_\infty$ satisfy $\|x - y\| \le \delta$, then $\|f(x) - g(y)\| \le \eps$.}
\end{gathered}
\end{equation} 

Next, take $x,x',x'' \in \cX_{\infty}$ such that $\langle x, x' \rangle < \langle x, x'' \rangle $. By the same fact that $\cY_\infty$ is dense in $\bbS^{p-1}$, there exists $N', N'' \subset \bbN$ such that $\lim_{n \in N'} y_n = x' $ and  $\lim_{n \in N''} y_n = x''$. For any large enough $n' \in N'$, there is $n'' \in N''$ such that $\langle x,y_{n'} \rangle < \langle x, y_{n''} \rangle $, and so by \eqref{eq:internal_isotonic}, $\langle f(x),g(y_{n'}) \rangle \leq \langle f(x), g(y_{n''}) \rangle $, and taking limits along $N'$ and $N''$, \eqref{eq:f_g} gives $\langle f(x),f(x')\rangle \leq \langle f(x), f(x'')\rangle. $
This being true for all $x,x',x'' \in \cX_{\infty}$ implies that $f$ is weakly isotonic on $\cX_{\infty}$. Applying \thmref{discrete_mds} with the item set $\cX_{\infty}$ and constant sequence $(f)$, we may conclude that $f$ coincides with an orthogonal transformation on $\cX_\infty$.  Without loss of generality, we assume henceforth that $f(x) = x$ for all $x \in \cX_\infty$. 

 In addition, by \eqref{eq:f_g_alt}, $g(y) = y$ for all $y \in \cl(\cX_\infty) \cap \cY_\infty$. To see this, fix $\eps > 0$ and let $\delta > 0$ be as in \eqref{eq:f_g_alt}. For any $y \in \cl(\cX_\infty) \cap \cY_\infty$, there is $x \in \cX_\infty$ such that $\|x-y\| \le \min\{\delta, \eps\}$. By \eqref{eq:f_g_alt}, it is also the case that $\|x-g(y)\| \le \eps$, and the triangle inequality then gives the bound $\|y-g(y)\| \le 2\eps$. Since $\eps>0$ was chosen arbitrary, we may indeed conclude that $g(y) = y$ for all $y \in \cl(\cX_\infty) \cap \cY_\infty$.

So far, we have established, by a straightforward extension, that $f = g = {\rm id}$ on $\cX := \cl(\cX_\infty)$ (using the assumption that $\cl(\cY_\infty)  = \bbS^{p-1}$).  From here the arguments used in the proof of \thmref{internal_continuum_main} can be adapted in a straightforward manner to further establish that $g$ can be extended to satisfy $g = {\rm id}$ on the whole sphere, allowing us to conclude.
\end{proof}

\subsection*{Acknowledgments} 
We would like to thank Finn Southerland for helpful discussions. 

\small
\bibliographystyle{chicago}
\bibliography{ref}

\end{document}

%% file: notation.tex
\def\S{\bbS^{p-1}}
\def\arc{\mathrm{arc}}
\def\hplane{\mathrm{H}}

\def\sint{\operatorname{int}}
\def\cl{\operatorname{cl}}
\def\sball{\mathrm{cap}}
\def\ball{\mathrm{ball}}

\newtheorem{theorem}{Theorem}[section]

\newtheorem{lemma}[theorem]{Lemma}

\theoremstyle{definition}

\theoremstyle{remark}
\newtheorem{remark}[theorem]{Remark}

\numberwithin{equation}{section}
\numberwithin{figure}{section}

\def\beq{\begin{equation}} % \setcounter{equation}{1}}
\def\eeq{\end{equation}}
\def\beqn{\begin{eqnarray*}}
\def\eeqn{\end{eqnarray*}}
\def\Bitem{\begin{itemize}\setlength{\itemsep}{.2in}}
\def\bitem{\begin{itemize}\setlength{\itemsep}{.05in}}
\def\eitem{\end{itemize}}
\def\Benum{\begin{enumerate}\setlength{\itemsep}{.2in}}
\def\benum{\begin{enumerate}\setlength{\itemsep}{.05in}}
\def\eenum{\end{enumerate}}
\def\bmult{\begin{multline*}}
\def\emult{\end{multline*}}
\def\bcenter{\begin{center}}
\def\ecenter{\end{center}}
\def\bframe{\begin{frame}}
\def\eframe{\end{frame}}

\newcommand{\thmref}[1]{Theorem~\ref{thm:#1}}

\newcommand{\lemref}[1]{Lemma~\ref{lem:#1}}
\newcommand{\secref}[1]{Section~\ref{sec:#1}}
\newcommand{\figref}[1]{Figure~\ref{fig:#1}}

\def\cB{\mathcal{B}}

\def\cH{\mathcal{H}}

\def\cS{\mathcal{S}}

\def\cU{\mathcal{U}}
\def\cV{\mathcal{V}}

\def\cX{\mathcal{X}}
\def\cY{\mathcal{Y}}

\def\bbN{\mathbb{N}}

\def\bbR{\mathbb{R}}
\def\bbS{\mathbb{S}}

\def\eps{\varepsilon}
\def\implies{\ \Rightarrow \ }
\def\iff{\ \Leftrightarrow \ }

\def\1{\mathbbm{1}}